\newtheorem{Lemma}{Lemma}[section]
\newtheorem{Corollary}[Lemma]{Corollary}
\newtheorem{Theorem}[Lemma]{Theorem}
\theoremstyle{definition}
\newtheorem{Definition}[Lemma]{Definition}
\theoremstyle{remark}
\newtheorem{Remark}[Lemma]{Remark}
\newtheoremstyle{proof*}
{3pt}
{3pt}
{\rmfamily}
{}
{\bfseries}
{.}
{.5em}
{\thmnote{#3}}
\theoremstyle{proof*}
\newtheorem*{proof*}{}
\DeclareMathOperator{\restrict}{\llcorner}
\DeclareMathOperator{\Tan}{Tan}
\DeclareMathOperator{\Nor}{Nor}
\DeclareMathOperator{\Hom}{Hom}     
\DeclareMathOperator{\Der}{D}       
\DeclareMathOperator{\pt}{pt}       
\DeclareMathOperator{\ap}{ap}
\DeclareMathOperator{\trace}{trace}
\newcommand{\Real}[1]{ \mathbf{R}^{#1}}
\newcommand{\Haus}[1]{ \mathscr{H}^{#1} }
\newcommand{\Leb}[1]{ \mathscr{L}^{#1} }
\newcommand{\rect}[1]{(\mathscr{H}^{#1},#1)}
\newcommand{\Lp}[1]{\mathbf{L}^{#1}}
\title{ABP inequalities for singular submanifolds of bounded mean curvature}
\author{Mario Santilli}
\begin{document}
	\maketitle
	
	\begin{abstract}
		Employing a notion of curvature for arbitrary closed sets we prove an ABP-type estimate for a class of singular submanifolds of arbitrary codimension and bounded mean curvature recently introduced by B.\ White. A weak-Harnack-type estimate is then derived using the ABP estimate. These results generalize analogous results by O.\ Savin for viscosity solutions of the minimal surface equation.   
	\end{abstract}

\paragraph{\small Keywords.}{\small Singular submanifolds, curvature, ABP estimate, Harnack estimate.}
	
	\section{Introduction}
	
We prove ABP (Alexandroff-Bakelmann-Pucci) inequalities for a class \textit{singular submanifolds of arbitrary codimension} in the Euclidean space which includes the class of varifolds without boundary and with bounded mean curvature. 

More specifically, we consider the following class.

\begin{Definition}\label{definition of (m,h)}
	Suppose $ 1 \leq m \leq n $ are integers, $ \Omega $ is an open subset of $ \Real{n+1} $, $ \Gamma $ is relatively closed in $ \Omega $ and $ h \geq 0 $. We say that $ \Gamma $ is a $ (m,h) $ subset of $ \Omega $ provided it has the following property: if $ x \in \Gamma $ and $ f $ is a $ \mathscr{C}^{2} $  function in a neighbourhood of $ x $ such that $ f|\Gamma $ has a local maximum at $ x $ and $ \nabla f(x) \neq 0 $, then
	\begin{equation*}
	\trace_{m}\Der^{2}f(x) \leq h |\nabla f(x)|,
	\end{equation*}
where $ \trace_{m} \Der^{2}f(x) $ is the sum of the lowest $ m $ eigenvalues of $ \Der^{2}f(x) $.
\end{Definition}

This class has been recently introduced\footnote{This definition is equivalent to \cite[2.1]{MR3466806}, by \cite[8.1]{MR3466806}.} in \cite[2.1]{MR3466806} in the analysis of the area blow-up of sequences of smooth submanifolds (or varifolds) with a uniform bound on the mean curvature. In fact, as proved in \cite[2.6]{MR3466806}, if $ M_{i} $ is a sequence of $ m $ dimensional varifolds in an open subset $ \Omega $ of $ \Real{n+1} $ with mean curvatures uniformly bounded in $ \Lp{\infty} $ by a non-negative number $ h $ and such that
\begin{equation*}
\limsup_{i \to \infty}|\partial M_{i}|(U) < \infty  \quad \textrm{whenever $ U \subset \subset \Omega $},
\end{equation*}
where $ |\partial M_{i}| $ is the measure associated to the generalized boundary of $ M_{i} $, then the smallest closed subset $ Z $ of $ \Omega $ such that the areas of $ M_{i} $ are uniformly bounded as $ i \to \infty $ on compact subsets of $ \Omega \sim Z $ is an $ (m,h) $ subset of $ \Omega $. As a simple consequence of this general fact, it follows that the support of every $ m $ dimensional varifold in $ \Omega $ with no boundary and mean curvature bounded in $ \Lp{\infty} $ by $ h $ is an $(m,h)$ subset of $ \Omega $, see \cite[2.8]{MR3466806}.

Special cases of \ref{definition of (m,h)}, under the name of viscosity minimal sets, have been studied in \cite{2017arXiv170507948S} and \cite{2017arXiv170801549S}.

We describe now the results of the present paper. 

Firstly we introduce some notation\footnote{For basic notation, see the end of the introduction.} that we use through the rest of this work. Suppose $ \mathscr{C}_{1}(0)=\mathbf{U}^{n}(0,1) \times \mathbf{R} \subseteq \mathbf{R}^{n+1} $, $ \Gamma \subseteq \mathscr{C}_{1}(0) $ is relatively closed and $ a > 0 $. For every $ x \in \mathbf{R}^{n} $ we define $ P_{a,x} $ to be the paraboloid of opening $ a $ and center $ x $ touching $ \overline{\Gamma} $ from above. For every closed set $ C \subseteq \mathbf{B}^{n}(0,1) $ we define the touching set
\begin{equation*}
A_{a}(\Gamma; C)
\end{equation*}
as the set of $ (z,\eta) $ such that $z \in \Sigma(P_{a,x})  \cap \overline{\Gamma}$ for some $ x \in C $  and $ \eta \in \Nor(\Sigma(P_{a,x}), z) $ with $ |\eta |=1 $ and $ \eta_{n+1} > 0 $. Moreover, we let
\begin{equation*}
A'_{a}(\Gamma; C) = \{ z' : \textrm{$(z, \eta) \in A_{a}(\Gamma; C)$ for some $ \eta \in \mathbf{S}^{n} $}\}.
\end{equation*}
When $ C = \mathbf{B}^{n}(0,1) $ we let $ A_{a}(\Gamma; C) = A_{a}(\Gamma) $ and $ A'_{a}(\Gamma;C) = A'_{a}(\Gamma) $.

For submanifolds of arbitrary codimension we obtain the following ABP inequality:

\begin{Theorem}[ABP estimate, arbitrary codimension]\label{ABP arbitrary cod}
	Suppose $ 1 \leq m \leq n $ are integers, $ h \geq 0 $, $ a > 0 $ and $ \Gamma $ is an $ (m,h) $ subset of $ \mathscr{C}_{1}(0) $ such that $ \Haus{m}(\Gamma) < \infty $. For every closed set $ C $ in $ \mathbf{B}^{n}(0,1) $, if $ \varnothing \neq A_{a}(\Gamma; C) \subseteq \Gamma \times \mathbf{S}^{n} $, then
	\begin{equation*}
		\Haus{n}(C) \leq \gamma \Big(1+a+\frac{1}{a}\Big)^{n-m}\Big(1 + a + \frac{h}{a}\Big)^{m} \int_{\Gamma^{(m)}}\Haus{n-m}\{ \eta : (z,\eta) \in A_{a}(\Gamma; C) \} d\Haus{m}z, 
	\end{equation*}
	 where $ \Gamma^{(m)} $ is the set of $ z \in \Gamma $ where $ \Gamma $ can be touched from $ n+1-m $ linearly independent directions by an open ball of $ \Real{n+1} $ and $ \gamma = 4^{n-m}(2m + 4h)^{m} $.
\end{Theorem}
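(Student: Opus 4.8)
The strategy is the classical Alexandrov–Bakelman–Pucci area-comparison argument, adapted to arbitrary codimension via the auxiliary paraboloids $P_{a,x}$. The key object is the "contact map" sending a center $x \in C$ to the touching point $z \in \Sigma(P_{a,x}) \cap \overline{\Gamma}$ together with the upward unit normal $\eta$. The plan is to show that $C$ is covered, up to $\Haus{n}$-null sets, by the image of a Lipschitz-type map defined on $A_a(\Gamma;C) \subseteq \Gamma \times \mathbf{S}^n$, and then estimate the relevant Jacobian from above using the $(m,h)$ condition. First I would set up the map $\Psi$: given $(z,\eta) \in A_a(\Gamma;C)$, the center $x = x(z,\eta)$ of the unique paraboloid of opening $a$ with downward axis passing through $z$ with prescribed normal direction $\eta$ is recovered by an explicit formula — essentially $x = z' + a^{-1}(\text{something linear in } \eta)$ — so $x$ is a smooth function of $(z,\eta)$. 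Surjectivity onto (almost all of) $C$ is the ABP sliding argument: for each $x \in C$ one slides the paraboloid $P_{a,x}$ down until it first touches $\overline{\Gamma}$, producing a contact point, and the nondegeneracy/genericity needed so that $\Gamma$ is touched there from $n+1-m$ independent ball directions (i.e. $z \in \Gamma^{(m)}$) should follow for $\Haus{n}$-a.e. $x$ from a standard covering/measure-theoretic reduction, possibly invoking the structure results from \cite{MR3466806}.

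The heart of the matter is the Jacobian estimate. On $\Gamma^{(m)}$, the set $\Gamma$ has, $\Haus{m}$-a.e., an approximate tangent $m$-plane and (by the one-sided ball condition) a well-defined second fundamental form in the sense of the curvature theory for closed sets developed in \cite{MR3466806}; the $(m,h)$ hypothesis, applied to the paraboloid $P_{a,x}$ as a $\mathscr{C}^2$ test function touching $\Gamma$ from above at $z$, says precisely that $\trace_m \Der^2(\text{height of } P_{a,x})$ at $z$ — which is the sum of the $m$ lowest eigenvalues of the relevant Hessian, equal to $2am$ minus a curvature correction — is $\le h|\nabla(\cdot)|$. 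Translating this into a bound on the generalized principal curvatures of $\Gamma$ at $z$ in the direction $\eta$, one gets that the differential of $\Psi$ restricted to the tangent directions along $\Gamma$ is controlled: its operator norm in the $m$ tangential directions is $\lesssim (a + h/a)$ while in the remaining $n-m$ normal-sphere directions the spreading of paraboloid centers is $\lesssim (a + 1/a + 1)$. Multiplying, the $n$-dimensional Jacobian of $\Psi$ is $\le \gamma (1+a+1/a)^{n-m}(1+a+h/a)^m$ with $\gamma = 4^{n-m}(2m+4h)^m$ absorbing the dimensional constants from the eigenvalue/AM–GM bookkeeping.

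With these two ingredients in hand, the area formula finishes the argument:
\[
\Haus{n}(C) \le \int_{A_a(\Gamma;C)} \!\! J\Psi \; d(\Haus{m}\!\restrict\Gamma^{(m)} \times \Haus{n-m}\!\restrict \mathbf{S}^n) \le \gamma\Big(1+a+\tfrac1a\Big)^{n-m}\!\Big(1+a+\tfrac ha\Big)^{m}\!\!\int_{\Gamma^{(m)}}\!\!\Haus{n-m}\{\eta : (z,\eta)\in A_a(\Gamma;C)\}\,d\Haus{m}z,
\]
where the inner integral over $\eta$ comes from Fubini on the product, after first restricting the domain of $\Psi$ (the touching set lives in $\Gamma\times\mathbf{S}^n$ by hypothesis, and only the part over $\Gamma^{(m)}$ contributes to covering $\Haus{n}$-a.e. point of $C$). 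The main obstacle I anticipate is the rigorous passage from the pointwise $(m,h)$ inequality — a viscosity-type condition involving only the $m$ smallest Hessian eigenvalues — to a genuine differential/Jacobian bound for the contact map: one must (i) justify that $\Psi$ is differentiable $\Haus{n}$-a.e. on the touching set, using that $\Gamma^{(m)}$ is $m$-rectifiable with a second-order structure, and (ii) show that the "worst" configuration of curvatures consistent with $\trace_m \Der^2 \le h|\nabla f|$ still yields the claimed product bound — this is where the split into $m$ "bad" (tangential, curvature-controlled) directions and $n-m$ "free" directions, and the precise form of $\gamma$, must be extracted carefully. Everything else (the sliding argument, the explicit center formula, and the area formula) is routine once the curvature-theoretic input from \cite{MR3466806} is invoked.
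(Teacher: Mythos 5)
Your overall strategy coincides with the paper's: a vertex map $\Psi$ sending each touching pair $(z,\eta)$ to the paraboloid center $x = z' + (a\eta_{n+1})^{-1}\eta'$, surjective onto $C$, followed by a Jacobian bound and an area formula. However, there are two genuine gaps at exactly the points where the arbitrary-codimension setting departs from Savin's argument. First, you treat the domain of $\Psi$ as (a subset of) the product $\Gamma^{(m)}\times\mathbf{S}^{n}$ carrying the product measure $\Haus{m}\restrict\Gamma^{(m)}\times\Haus{n-m}\restrict\mathbf{S}^{n}$ and invoke ``Fubini''. The touching set $A_{a}(\Gamma;C)$ is not a product and carries no natural product measure: it is an $\Haus{n}$-rectifiable subset of the generalized normal bundle $N(\overline{\Gamma})\subseteq\Real{n+1}\times\mathbf{S}^{n}$ (in fact of $N_{a^{-1}}(\overline{\Gamma})$, since touching by $P_{a,x}$ from above yields $\bm{\delta}_{\overline{\Gamma}}(z+(a\eta_{n+1})^{-1}\eta)=(a\eta_{n+1})^{-1}$). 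The passage from $\Haus{n}$ on this bundle to the iterated integral $\int_{\Gamma^{(m)}}\Haus{n-m}\{\eta:(z,\eta)\in A_{a}(\Gamma;C)\}\,d\Haus{m}z$ is the nontrivial coarea-type disintegration \ref{general facts}\eqref{general facts: Coarea}, which carries the Jacobian weight $\prod_{i=1}^{m}(1+\kappa_{i}^{2})^{-1/2}$; in the paper this weight is produced by computing $\|\bigwedge_{n}\Der\Psi\|$ on the curvature-adapted orthonormal basis of $\Tan^{n}(\Haus{n}\restrict N_{r},(z,\eta))$ from \ref{general facts}\eqref{general facts:basis} and then cancels against the coarea factor. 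Without this structure the final inequality does not follow from your displayed formula, and your claim that ``only the part over $\Gamma^{(m)}$ contributes'' for a.e.\ $x\in C$ is itself not a standard covering fact: it requires the Lusin $(N)$ condition and the stratification results ($\Haus{m}(\Gamma^{(i)})=0$ for $i<m$, $\dim T_{\overline{\Gamma}}(z,\eta)=m$ a.e., $\kappa_{m+1}=\infty$), all of which use the hypothesis $\Haus{m}(\Gamma)<\infty$.

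Second, you correctly identify but do not fill the main analytic step: converting the viscosity-type $(m,h)$ inequality into an $\Haus{n}$-a.e.\ bound on the generalized principal curvatures along the normal bundle. This is not a pointwise application of the definition to $P_{a,x}$ at the contact point (the second fundamental form of $\overline{\Gamma}$ is only defined $\Haus{n}$-a.e.\ on $N(\overline{\Gamma})$, with no pointwise meaning at a given touching point); it is the content of \ref{trace and viscosity minimal sets}, proved by working on the level sets $\bm{\delta}_{\overline{\Gamma}}^{-1}\{r\}$, applying the barrier principle (\ref{weak maximum principle}) to a graph representation of the level set, and transporting the eigenvalue bound back via $\kappa_{i}=\chi_{i}(1-r\chi_{i})^{-1}$ together with a limiting argument in $r$. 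Moreover the Jacobian bound needs both that trace estimate, giving $\kappa_{i}\leq(m-1)a\eta_{n+1}+h$ for $i\leq m$ and $\kappa_{m+1}=\infty$, and the lower bound $\kappa_{i}\geq-a\eta_{n+1}$ coming from the one-sided touching (membership in $N_{a^{-1}}(\overline{\Gamma})$), which your sketch does not derive. As it stands, the proposal reproduces the skeleton of the argument but leaves out precisely the curvature-theoretic machinery that makes the arbitrary-codimension case work.
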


In case of hypersurfaces, the previous result can be refined as follows:

 \begin{Corollary}[ABP estimate, codimension 1]\label{ABP}
	Suppose $ m = n $ in \ref{ABP arbitrary cod}. For every closed set $ C $ in $ \mathbf{B}^{n}(0,1) $, if $ \varnothing \neq A_{a}(\Gamma; C) \subseteq \Gamma \times \mathbf{S}^{n} $, then
	\begin{equation}\label{ABP:2}
	\Haus{n}(C) \leq \gamma (1+a+ha^{-1})^{n}(1+4a^{2})^{1/2}\Haus{n}(A'_{a}(\Gamma; C)),
	\end{equation}
	where $ \gamma = (2n + 4h)^{n} $.
\end{Corollary}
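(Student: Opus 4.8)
The plan is to deduce \ref{ABP} from \ref{ABP arbitrary cod} specialized to $m=n$, the gain coming entirely from the codimension-one geometry of the touching paraboloids. Applying \ref{ABP arbitrary cod} with $m=n$ the exponent $n-m$ vanishes, $\gamma=4^{0}(2n+4h)^{n}=(2n+4h)^{n}$, and $\Haus{n-m}=\Haus{0}$ is counting measure, so
\begin{equation*}
\Haus{n}(C)\;\leq\;(2n+4h)^{n}\Big(1+a+\frac{h}{a}\Big)^{n}\int_{\Gamma^{(n)}}\Haus{0}\{\eta:(z,\eta)\in A_{a}(\Gamma;C)\}\,d\Haus{n}z .
\end{equation*}
Hence it suffices to show
\begin{equation*}
\int_{\Gamma^{(n)}}\Haus{0}\{\eta:(z,\eta)\in A_{a}(\Gamma;C)\}\,d\Haus{n}z\;\leq\;(1+4a^{2})^{1/2}\,\Haus{n}(A'_{a}(\Gamma;C)).
\end{equation*}

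Write $\pi\colon\Real{n+1}\to\Real{n}$ for the projection onto the first $n$ coordinates and put $B=\{z:(z,\eta)\in A_{a}(\Gamma;C)\text{ for some }\eta\in\mathbf{S}^{n}\}$, so that $\pi(B)=A'_{a}(\Gamma;C)$, the integrand above is supported on $B$, and $B\subseteq\Gamma\subseteq\mathbf{U}^{n}(0,1)\times\mathbf{R}$. I would first show that $B$ is the graph of a $2a$-Lipschitz function $g$ over $\pi(B)$. Each $z=(y,t)\in B$ is a contact point of a paraboloid $P_{a,x}$, which I write as the graph over $\mathbf{B}^{n}(0,1)$ of a concave function $\phi_{x}$ with $|\nabla\phi_{x}|\leq 2a$ (from the normalization of the opening together with $x,\pi(B)\subseteq\mathbf{B}^{n}(0,1)$); since $P_{a,x}$ touches $\overline{\Gamma}$ from above, $\overline{\Gamma}$ lies weakly below $\gr(\phi_{x})$ over all of $\mathbf{B}^{n}(0,1)$ and $t=\phi_{x}(y)$. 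Thus every $(y',t')\in B$ obeys $t'\leq\phi_{x}(y')$; taking $y'=y$ gives $t'\leq t$, and symmetrically $t\leq t'$, so $B$ is indeed a graph, while in general $g(y')-g(y)=t'-t\leq\phi_{x}(y')-\phi_{x}(y)\leq 2a|y'-y|$ and, by symmetry, $|g(y')-g(y)|\leq 2a|y'-y|$. The area formula for Lipschitz graphs then yields
\begin{equation*}
\Haus{n}(B)=\int_{\pi(B)}\big(1+|\nabla g|^{2}\big)^{1/2}\,d\Haus{n}\;\leq\;(1+4a^{2})^{1/2}\,\Haus{n}(\pi(B))\;=\;(1+4a^{2})^{1/2}\,\Haus{n}(A'_{a}(\Gamma;C)),
\end{equation*}
and in particular $B$ is countably $n$-rectifiable with $\Haus{n}(B)<\infty$.

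It remains to check that $\Haus{0}\{\eta:(z,\eta)\in A_{a}(\Gamma;C)\}=1$ for $\Haus{n}$-a.e.\ $z\in B$, which completes the proof since then the left-hand integral equals $\Haus{n}(B\cap\Gamma^{(n)})\leq\Haus{n}(B)$. Fix $z=(y,g(y))$ with $y$ a density point of $\pi(B)$ at which $g$ is differentiable; by the previous paragraph and Rademacher's theorem this excludes only a $\Haus{n}$-null subset of $B$. For every $x$ with $z\in\Sigma(P_{a,x})\cap\overline{\Gamma}$ one has $g\leq\phi_{x}$ on $\pi(B)$ with equality at $y$, and a standard argument (e.g.\ after extending $g$ to a $2a$-Lipschitz function on $\mathbf{B}^{n}(0,1)$) gives $\nabla g(y)=\nabla\phi_{x}(y)$; hence every such paraboloid has the same tangent hyperplane $\gr(\nabla g(y))$ at $z$, so its unit normal at $z$ with positive last coordinate is the single vector $(-\nabla g(y),1)\big(1+|\nabla g(y)|^{2}\big)^{-1/2}$, independently of $x$. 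Therefore $\{\eta:(z,\eta)\in A_{a}(\Gamma;C)\}$ is a singleton, as required.

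The only genuinely delicate points are: the slope bound $|\nabla\phi_{x}|\leq 2a$, which must be read off from the paper's normalization of ``opening $a$'' (and is precisely what produces the factor $(1+4a^{2})^{1/2}$ rather than a crude $(1+4a^{2})^{n/2}$); and the a.e.\ identification of tangent planes in the last paragraph, i.e.\ transferring differentiability from the Lipschitz function $g$ to equality of gradients with the touching paraboloids, which is a Lebesgue-density argument. Once $\Sigma(P_{a,x})$ is understood as the smooth paraboloid hypersurface (so that its normal cone at a contact point is a line), the structural $(n,h)$-hypothesis enters only through \ref{ABP arbitrary cod} itself and plays no further role here.
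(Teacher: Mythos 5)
Your argument is correct, and the specialization of \ref{ABP arbitrary cod} to $m=n$ together with the constant bookkeeping is exactly as in the paper; where you genuinely diverge is in how the fibre integral over $\Gamma^{(n)}$ gets converted into $\Haus{n}(A'_{a}(\Gamma;C))$. The paper does this through the separate Lemma \ref{ABP : aux lemma}: with $B$ the set of contact points, it quotes the $n$-rectifiability of $B$ from \cite[4.12]{2017arXiv170309561M}, identifies $\Tan^{n}(\Haus{n}\restrict B,z)$ with the tangent plane of a touching paraboloid via \cite[3.2.16, 3.2.19]{MR0257325}, and applies \cite[3.2.20]{MR0257325} to the injective projection $z\mapsto z'$; the reduction of the fibre integral to $\Haus{n}(B)$ is then immediate because a point of $\Gamma^{(n)}$ admits at most one admissible direction (two distinct upward unit normals are linearly independent, which the characterization of the top stratum excludes). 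You instead prove that $B$ is the graph of a $2a$-Lipschitz function over $A'_{a}(\Gamma;C)$ and extract the sharp factor $(1+4a^{2})^{1/2}$ from the Lipschitz-graph area formula, and you replace the stratum argument by a Rademacher--Lebesgue-density argument showing that $\Haus{n}$-a.e.\ contact point carries a unique upward normal. This is a correct and more elementary proof of the auxiliary step: it avoids the rectifiability theorem of \cite{2017arXiv170309561M} and the tangent-cone comparisons, at the price of proving by hand an a.e.\ uniqueness statement that the paper gets for free from the definition of $\Gamma^{(n)}$; conversely, the paper's lemma does not rely on the graph structure and slots directly into the stratification framework used elsewhere in the paper. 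Two small remarks: $P_{a,x}$ restricted to $\mathbf{B}^{n}(0,1)$ is convex, not concave (harmless, since you only use the bound $|\nabla P_{a,x}|\leq 2a$ there), and your ``for $\Haus{n}$-a.e.\ $z\in B$'' reductions rest on the observation, implicit in your graph construction, that the Lipschitz graph map carries $\Leb{n}$-null subsets of $A'_{a}(\Gamma;C)$ to $\Haus{n}$-null subsets of $B$, so it is worth stating that transfer explicitly.
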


As a special case of \ref{ABP} we obtain the following ABP estimate proved in \cite[2.1]{MR2334822} for viscosity subsolutions of the minimal surface equation\footnote{The results in \cite{MR2334822} are actually proved for a class of elliptic operators $ F $ that are uniformly elliptic in a neighbourhood of the origin and $ F(0,p,z,x) =0 $ whenever $ p $, $ z $ and $ x $ are close to $ 0 $.} .

\begin{Corollary}[Savin]\label{Savin ABP}
	Let $ u $ be a continuous viscosity subsolution in $ \mathbf{U}^{n}(0,1) $ of the minimal surface equation, let $ \Gamma \subseteq \Real{n+1} $ be the graph of $ u $ and let $ 0 < a \leq 1 $. 
	
	Then there exists a small universal constant $ \gamma $ (depending only on $ n $) such that for every closed set $ C \subseteq \mathbf{B}^{n}(0,1) $ such that $ A'_{a}(\Gamma; C) \subseteq \mathbf{U}^{n}(0,1) $,
	\begin{equation*}
	\Haus{n}(A'_{a}(\Gamma;C)) \geq \gamma \Haus{n}(C).
	\end{equation*}
\end{Corollary}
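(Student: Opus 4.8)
The plan is to deduce Corollary \ref{Savin ABP} from Corollary \ref{ABP} by verifying that the graph $\Gamma$ of a continuous viscosity subsolution of the minimal surface equation is an $(n,0)$ subset of $\mathscr{C}_1(0)$, and then tracking how the constants in \eqref{ABP:2} collapse into a single universal constant under the hypothesis $0 < a \leq 1$. First I would recall that $u$ being a viscosity subsolution of the minimal surface equation means: whenever $\varphi \in \mathscr{C}^2$ touches $u$ from above at a point $x_0 \in \mathbf{U}^n(0,1)$, the minimal surface operator applied to $\varphi$ is $\geq 0$ at $x_0$, i.e. the mean curvature of the graph of $\varphi$ (with the appropriate sign convention) is nonpositive there. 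Translating this to the graph: if $f$ is a $\mathscr{C}^2$ function near a point $z = (x_0, u(x_0)) \in \Gamma$ with $f|\Gamma$ having a local maximum at $z$ and $\nabla f(z) \neq 0$, then near $z$ the set $\{f \geq f(z)\}$ contains $\Gamma$, so the graph of $u$ lies locally on one side of the hypersurface $\{f = f(z)\}$; because $\nabla f(z)\neq 0$ this level set is a $\mathscr{C}^2$ hypersurface through $z$, which by the implicit function theorem is locally a graph $x_{n+1} = \psi(x')$ (after checking the vertical component of $\nabla f$ does not vanish — one must handle the degenerate case, but there $\trace_n D^2 f$ can be estimated directly, or one perturbs) touching $u$ from above, hence $\psi$ is an admissible test function, so the minimal surface operator on $\psi$ is $\geq 0$; a computation identifies $\trace_n D^2 f(z)$ (the sum of the lowest $n$ eigenvalues, i.e. all but the largest, which corresponds to the normal direction) with a positive multiple of the mean-curvature expression for $\psi$, yielding $\trace_n D^2 f(z) \leq 0 = 0\cdot|\nabla f(z)|$. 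Thus $\Gamma$ is an $(n,0)$ subset, so $h = 0$ is admissible in \ref{ABP arbitrary cod} and \ref{ABP}.

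With $h = 0$, Corollary \ref{ABP} gives, for any closed $C \subseteq \mathbf{B}^n(0,1)$ with $\varnothing \neq A_a(\Gamma; C) \subseteq \Gamma \times \mathbf{S}^n$,
\begin{equation*}
\Haus{n}(C) \leq (2n)^n (1+a)^n (1+4a^2)^{1/2} \Haus{n}(A'_a(\Gamma;C)).
\end{equation*}
Under $0 < a \leq 1$ the factor $(1+a)^n(1+4a^2)^{1/2}$ is bounded above by $2^n \sqrt{5}$, so the whole prefactor is bounded by a constant $C_n$ depending only on $n$; taking $\gamma = 1/C_n$ yields $\Haus{n}(A'_a(\Gamma;C)) \geq \gamma \Haus{n}(C)$, which is the claimed inequality. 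It remains to check the structural hypothesis $A_a(\Gamma;C) \subseteq \Gamma \times \mathbf{S}^n$ from the hypothesis of Corollary \ref{Savin ABP}, namely $A'_a(\Gamma;C) \subseteq \mathbf{U}^n(0,1)$: the touching set $A_a(\Gamma;C)$ consists of pairs $(z,\eta)$ where $z$ is a point of $\overline{\Gamma}$ touched from above by a paraboloid $P_{a,x}$ of opening $a$, $x \in C$; since $a \leq 1$ and $x \in \mathbf{B}^n(0,1)$, the horizontal projection of the contact point lies in $A'_a(\Gamma;C) \subseteq \mathbf{U}^n(0,1)$, an \emph{open} subset of the unit disk, so the contact point $z$ has its $\mathbf{R}^n$-component interior to the cylinder; one then argues that such an interior contact point of $\overline\Gamma$ must actually lie in $\Gamma$ (continuity of $u$ makes the graph relatively closed, and a paraboloid touching the closure of a continuous graph from above over an interior point touches the graph itself), giving $z \in \Gamma$ as required. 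If $A_a(\Gamma;C) = \varnothing$ then $A'_a(\Gamma;C) = \varnothing$, which forces $C = \varnothing$ (every $x \in C$ produces a nonempty contact set for $P_{a,x}$ with the graph of a continuous function over $\mathbf{B}^n(0,1)$), and the inequality is trivial.

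The main obstacle I anticipate is the first step: cleanly showing that the viscosity subsolution property transfers to the curvature condition in Definition \ref{definition of (m,h)}, in particular dealing with the case $\nabla f(z) \neq 0$ but the vertical component of $\nabla f$ vanishing (so the level set of $f$ is "vertical" at $z$ and not locally a graph over $\mathbf{R}^n$). In that situation $\{f = f(z)\}$ is tangent to the $x_{n+1}$-axis direction; one can argue that $\Gamma$, being a graph of a continuous — hence locally bounded — function, cannot be touched from above at an interior point with a nonvanishing horizontal gradient without contradiction, or alternatively approximate $f$ by test functions whose level sets are graphs and pass to the limit using the continuity of $\trace_n D^2 f$ and of $\nabla f$. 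This is exactly the kind of degenerate-direction bookkeeping that the $(m,h)$ formalism is designed to absorb, and I expect it to be routine but the one place where care is genuinely needed; everything else is a substitution into \ref{ABP} plus elementary estimates on the $a$-dependent constants.
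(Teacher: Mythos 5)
The central step of your reduction is false: the graph of a continuous viscosity subsolution of the minimal surface equation is in general \emph{not} an $(n,0)$ subset of $\mathscr{C}_{1}(0)$, so you cannot simply invoke \ref{ABP} with $h=0$. Definition \ref{definition of (m,h)} imposes the trace inequality for \emph{every} test function $f$ such that $f|\Gamma$ has a local maximum and $\nabla f\neq 0$, and this includes test functions whose gradient at the touching point has \emph{negative} vertical component, i.e.\ whose level set touches the graph from \emph{below}; there the subsolution property gives no information at all. Concretely, let $u(x)=|x|^{2}$, a classical (hence viscosity) strict subsolution, and $f(z)=(1-\epsilon)|z'|^{2}-z_{n+1}$ (add a term $Kz_{n+1}^{2}$ with $K$ large when $n=1$): then $f|\Gamma$ has a local maximum at the origin, $\nabla f(0)=(0,\dots,0,-1)\neq 0$, but the sum of the lowest $n$ eigenvalues of $\Der^{2}f(0)$ equals $2(1-\epsilon)(n-1)$ (resp.\ $2(1-\epsilon)$ for $n=1$), which is positive, so the $(n,0)$ condition fails. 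In fact the graph of a subsolution is an $(n,h)$ set only for $h$ at least the supremum of its upward mean curvature, which is not universally bounded, and inserting such an $h$ into \eqref{ABP:2} destroys the universal constant. The degenerate case you worry about (horizontal part of $\nabla f$) is not where the difficulty lies; your argument silently assumes the vertical component of $\nabla f$ at the touching point to be positive, and that is exactly the gap.

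What the deduction actually requires is the observation that \ref{ABP arbitrary cod} and \ref{ABP} use the $(m,h)$ hypothesis only through the trace estimate of \ref{trace and viscosity minimal sets} evaluated at points of $A_{a}(\Gamma;C)$, where every normal satisfies $\eta_{n+1}\geq(1+4a^{2})^{-1/2}>0$. So the statement to prove is the one-sided analogue of \ref{trace and viscosity minimal sets}: $\sum_{i=1}^{n}\kappa_{i}(z,\eta)\leq 0$ for $\Haus{n}$ a.e.\ $(z,\eta)\in N(\overline{\Gamma})$ with $\eta_{n+1}$ bounded below (it suffices to have it a.e.\ on $A_{a}(\Gamma;C)$). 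This is obtained by rerunning the proofs of \ref{weak maximum principle} and \ref{trace and viscosity minimal sets} with the viscosity subsolution inequality in place of the $(m,h)$ condition: at the relevant points the touching direction $\bm{\nu}_{A}(x)$ is upward, so the Lipschitz graph over $T=\{v:v\bullet\bm{\nu}_{A}(x)=0\}$ constructed there is locally also a graph over $\Real{n}$ touching $u$ from above at a point of pointwise second-order differentiability, and the subsolution property (extended to such test functions by the standard approximation) yields $\chi_{1}+\dots+\chi_{n}\geq 0$ in the notation of \ref{weak maximum principle}; only with this substitute do your constant estimates for $0<a\leq 1$ (which are fine) complete the proof. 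You should also address the hypothesis $\Haus{n}(\Gamma)<\infty$ of \ref{ABP}, which does not appear in \ref{Savin ABP} and is not automatic for a merely continuous subsolution, as well as the existence of the touching paraboloids $P_{a,x}$ when $u$ is unbounded near the boundary; neither point is covered by your final paragraph.
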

Similar estimates have been proved in several other works since then; see, for example, \cite[3.5]{MR3500837} \cite[6.5]{MR3695374}, \cite[3.5]{MR3265174} and, in a Riemannian setting, \cite[Theorem 1.2]{MR2989991}. 

It is well known, starting with the prioneering work of Krilov and Safanov, that the ABP estimate plays a fundamental role in the derivation of Harnack-type estimates. In this regard \ref{ABP} can be employed to extend to $ (n,h) $ sets in $ \mathbf{R}^{n+1} $ the Measure-Estimate in \cite[6.1]{MR2757359} (the same estimate is implicitly contained in the first part of the proof of the Harnack inequality in \cite[1.1]{MR2334822}). This type of estimate is also known as weak Harnack inequality, see \cite[6.2]{MR3695374}. Having the ABP estimate at our disposal the proof goes along the line of \cite[2.2]{MR2334822}.

\begin{Theorem}[Measure-Estimate]\label{Extrinsic weak Harnack inequality}
	For every $ n \geq 1 $ there exist $ \alpha > 1 $ and $ 0 < \beta < 1 $ such that the following statement holds. For every $ \mu > 0 $ there exists an integer $ k \geq 1 $ so that if $ \Gamma $ is an $(n,h)$ subset of $ \mathcal{C}_{1}(0) $ with $ \Haus{n}(\Gamma) < \infty $, $ 0 \leq h < \alpha^{-k-1} $,
	\begin{equation*}
	\Gamma \subseteq \{ x : x_{n+1} \leq 0  \} \quad \textrm{and} \quad \Gamma \cap \{ x : |x'|\leq 1/4, \; -\alpha^{-k-1}/48 \leq x_{n+1} \leq 0 \} \neq \varnothing,
	\end{equation*}
	then
	\begin{equation*}
	\Leb{n}(\mathbf{B}^{n}(0, 1/3) \sim A'_{\alpha^{-1}}(\Gamma)) \leq \mu \Leb{n}(\mathbf{B}^{n}(0, 1/3)).
	\end{equation*}
\end{Theorem}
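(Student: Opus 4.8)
The plan is to follow the iterative scheme from \cite[2.2]{MR2334822}, using Corollary \ref{ABP} as the quantitative engine that converts a lower bound on the measure of a touching set back into a lower bound on the measure of the corresponding set of centers. The strategy is as follows. Fix a small $ a = \alpha^{-1} $. Since $ \Gamma $ lies below the hyperplane $ \{ x_{n+1} = 0 \} $ and comes within vertical distance $ \alpha^{-k-1}/48 $ of it over the disk $ \{|x'| \le 1/4\} $, for every center $ x $ in a suitable sub-disk the paraboloid $ P_{a,x} $ of opening $ a $ that touches $ \overline{\Gamma} $ from above must make contact at a point whose $ x' $-coordinate is \emph{quantitatively} controlled; since the paraboloid opens slowly (small $ a $) and $ \Gamma $ is nearly tangent to the top hyperplane near the origin, the contact point $ z $ has $ z' $ close to $ x $, hence $ z' \in \mathbf{U}^{n}(0,1) $, and in fact $ A'_{a}(\Gamma; C) \subseteq \mathbf{U}^{n}(0,1) $ for $ C $ a fixed disk like $ \mathbf{B}^{n}(0,1/3) $. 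This geometric localization — pinning down where the paraboloids touch, and in particular ensuring the touching happens inside the cylinder so that Corollary \ref{ABP} is applicable with the full set $ C $ of centers — is the step I expect to be the main obstacle, since it requires carefully chasing the interplay between the opening $ a $, the smallness of $ h $ (comparable to $ \alpha^{-k-1} $), and the near-tangency hypothesis.

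With that in hand, I would argue by contradiction. Suppose
\begin{equation*}
\Leb{n}(\mathbf{B}^{n}(0,1/3) \without A'_{a}(\Gamma)) > \mu \Leb{n}(\mathbf{B}^{n}(0,1/3)).
\end{equation*}
Set $ C = \mathbf{B}^{n}(0,1/3) \without A'_{a}(\Gamma) $, a closed subset of $ \mathbf{B}^{n}(0,1) $. One checks that the touching set $ A_{a}(\Gamma; C) $ is nonempty (from the near-contact hypothesis, some paraboloid with center in $ C $ does touch $ \overline\Gamma $) and, using the localization above together with $ 0 \le h < \alpha^{-k-1} $, that $ A_{a}(\Gamma; C) \subseteq \Gamma \times \mathbf{S}^{n} $. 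Corollary \ref{ABP} then gives
\begin{equation*}
\mu \Leb{n}(\mathbf{B}^{n}(0,1/3)) < \Haus{n}(C) \le \gamma (1+a+ha^{-1})^{n}(1+4a^{2})^{1/2}\Haus{n}(A'_{a}(\Gamma; C)).
\end{equation*}
But by the very definition of $ C $, every center $ x \in C $ lies outside $ A'_{a}(\Gamma) $, so the contact points of the paraboloids $ P_{a,x} $ for $ x \in C $ project into $ A'_{a}(\Gamma) $, whence $ A'_{a}(\Gamma; C) \subseteq A'_{a}(\Gamma) $; moreover $ A'_{a}(\Gamma; C) $ and $ C $ are disjoint up to the estimate — more precisely, since $ C = \mathbf{B}^{n}(0,1/3)\without A'_a(\Gamma) $, we get $ \Haus{n}(A'_a(\Gamma;C)) \le \Leb{n}(\mathbf{B}^{n}(0,1/3)) - \Leb{n}(C) $ once we know $ A'_a(\Gamma;C)\subseteq \mathbf{B}^n(0,1/3) $ (another place the localization step is used). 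Combining, $ \mu\, \Leb{n}(\mathbf{B}^n(0,1/3)) < \gamma(1+a+ha^{-1})^n(1+4a^2)^{1/2}\big(\Leb{n}(\mathbf{B}^n(0,1/3)) - \Leb{n}(C)\big) $.

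The final step is to choose the parameters so this is impossible when $ \mu $ is too large — i.e., to extract $ \alpha $, $ \beta $ and, for given $ \mu $, the integer $ k $. With $ a = \alpha^{-1} \le 1 $ and $ h < \alpha^{-k-1} $, the prefactor $ \gamma(1+a+ha^{-1})^n(1+4a^2)^{1/2} $ is bounded by a constant $ \Lambda = \Lambda(n) $ independent of $ k $ once $ \alpha $ is fixed large enough. Then the displayed inequality forces $ \Leb{n}(C) < (1 - \mu/\Lambda)\Leb{n}(\mathbf{B}^n(0,1/3)) $, i.e. the complement has measure at least $ (\mu/\Lambda)\Leb{n}(\mathbf{B}^n(0,1/3)) $ — wait, that is consistent, so the contradiction must instead be engineered by \emph{iterating}: one applies the above on a family of dyadic (paraboloid-)rescalings, so that a fixed fraction of the bad set is removed at each of $ k $ stages, and after $ k $ steps the remaining bad measure is at most $ (1-c)^k \Leb{n}(\mathbf{B}^n(0,1/3)) $, which is $ \le \mu \Leb{n}(\mathbf{B}^n(0,1/3)) $ provided $ k = k(\mu) $ is chosen with $ (1-c)^k \le \mu $; the smallness $ h < \alpha^{-k-1} $ is exactly what guarantees the $ (m,h) $-hypothesis survives all $ k $ rescalings (each rescaling by factor $ \alpha $ multiplies the curvature bound by $ \alpha $, so after $ k $ steps it is still $ < 1 $, keeping the geometry under control). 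The bookkeeping of this iteration, and verifying that at each stage the localization and the hypotheses of Corollary \ref{ABP} persist, is the routine but delicate part; I would lift it essentially verbatim from \cite[2.2]{MR2334822}, substituting Corollary \ref{ABP} for the classical ABP estimate used there. The constant $ \beta $ records the fixed fraction $ c $ removed per stage and $ \alpha $ the rescaling ratio.
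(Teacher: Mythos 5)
There is a genuine gap, and it is exactly where you placed your bets. Your direct application of Corollary \ref{ABP} with $C=\mathbf{B}^{n}(0,1/3)\setminus A'_{\alpha^{-1}}(\Gamma)$ cannot give the statement — as you notice yourself, it only yields $\Leb{n}(C)\le \Lambda\,\Leb{n}(\mathbf{B}^{n}(0,1/3)\cap A'_{\alpha^{-1}}(\Gamma))$ with $\Lambda=\Lambda(n)\ge 1$, i.e.\ a fixed, $\mu$-independent lower bound on the good set, never a $(1-\mu)$-fraction. The repair you sketch (``iterate on dyadic rescalings, removing a fixed fraction of the bad set at each stage, lifted verbatim from Savin'') is precisely the missing content, and the mechanism you propose would not work: with the opening frozen at $a=\alpha^{-1}$ the sets $A'_{\alpha^{-1}}(\Gamma)$ have no self-improvement property, and the hypothesis $h<\alpha^{-k-1}$ has nothing to do with a curvature bound surviving $k$ rescalings of $\Gamma$. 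What actually drives the iteration is a point-to-measure estimate at \emph{growing openings}: if $A'_{a}(\Gamma)$ meets $\mathbf{U}^{n}(x_{0},r)$, then $\Leb{n}(A'_{\alpha a}(\Gamma)\cap\mathbf{U}^{n}(x_{0},r/8))\ge\beta r^{n}$ (Lemma \ref{measure to point estimate}, the analogue of Savin's Lemma 2.2 — which, incidentally, is the per-step lemma, not the ``bookkeeping'' you cite it for). Its proof is not a bare application of Corollary \ref{ABP}: one builds the barrier $\psi=P_{a,x_{1}}+ar^{2}\phi(|x-x_{0}|/r)$ with a radial profile $\phi$, uses the weak maximum principle \ref{weak maximum principle} to force a contact point in $\overline{\mathcal{C}_{r/16}(x_{0})}$ with quantitative height control, and only then applies Corollary \ref{ABP} to the slid paraboloids $Q_{y}$ of the larger opening $(\theta+1)a$ with centers $y$ ranging over a ball of radius comparable to $r$. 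Nothing in your proposal produces this step.

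Even granting that lemma, the iteration is not run on your contradiction set $C$: the paper sets $\epsilon=(48\alpha^{k+1})^{-1}$ and $F_{j}=\mathbf{B}^{n}(0,1/3)\cap A'_{48\epsilon\alpha^{j}}(\Gamma)$, so the openings grow from $\alpha^{-k-1}$ to $\alpha^{-1}$ over $k$ steps; the hypotheses $\Gamma\subseteq\{x:x_{n+1}\le 0\}$ and the near-contact over $\{|x'|\le 1/4\}$ are used once, to show that the paraboloid of the \emph{smallest} opening $\alpha^{-k-1}$ touches $\overline{\Gamma}$ inside $\mathcal{C}_{1/3}(0)$, giving $F_{0}\neq\varnothing$ (your localization discussion works with the wrong opening $\alpha^{-1}$), and $h<\alpha^{-k-1}$ is needed so that $h<a$ holds already at that first, smallest opening. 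The Vitali-type covering lemma of Savin (his Lemma 2.3, or \cite[6.4]{MR2757359}) then converts the per-step estimate into the geometric decay $\Leb{n}(\mathbf{B}^{n}(0,1/3)\setminus F_{j+1})\le(1-\beta_{1})\Leb{n}(\mathbf{B}^{n}(0,1/3)\setminus F_{j})$, and after $k$ steps one chooses $k$ with $(1-\beta_{1})^{k}\le\mu$. In short: the intermediate measure-to-point lemma at growing openings and the covering-lemma iteration \emph{are} the proof, and your proposal contains neither.
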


We finally remark that the proof of \ref{ABP arbitrary cod} and \ref{ABP}, though based on the same general idea of \cite[2.1]{MR2334822}, use a substantially different method to treat the arbitrary-codimension setting of \ref{ABP arbitrary cod}. This method is based on the theory of curvature for arbitrary closed sets recently developed in \cite{2017arXiv170801549S} (see also \cite{MR534512}, \cite{MR2031455} for earlier contributions in this direction). The proof of \cite[2.1]{MR2334822} goes in the following way: firstly the problem is reduced to semi-concave functions by inf-convolution approximation; since such functions are differentiable on the contact set, it is possible to introduce an auxiliary function (\emph{vertex map}), mapping each contact point into the vertex of the corresponding touching paraboloid; the vertex map is Lipschitz and its derivative can be suitably estimated using the assumptions on the elliptic operator $ F $; henceforth, an application of the classical area formula for Lipschitz maps gives the conclusion. In the present paper we also use a vertex map, but now its domain is a subset of the normal bundle of the $ (m,h) $ set $ \Gamma $, namely $ A_{a}(\Gamma; C) $. Then, in order to complete the proof following the idea of \cite[2.1]{MR2334822}, we need a suitable area formula for functions whose domain is a subset of the normal bundle, see \ref{general facts}\eqref{general facts: Coarea}, and suitable estimates for the differential of the vertex map, that can be deduced from the upper bound on the trace of the second fundamental form in \ref{trace and viscosity minimal sets}.

\paragraph*{Notation.}The distance function from a closed set $ \Gamma $ in $ \Real{n+1} $ is $ \bm{\delta}_{\Gamma} $ and the nearest point projection onto $ \Gamma $ is $ \bm{\xi}_{\Gamma} $. If $T$ is a linear subspace of $ \Real{n+1} $ then $ T_{\natural} $ is the orthogonal projection onto $ T $.

If $ z \in \Real{n} \times \Real{} \cong \Real{n+1} $ we define $ z' \in \Real{n} $ so that \mbox{$(z', z_{n+1}) = z $.}
	If $ \rho > 0 $ and $ x \in \Real{n} $ we define 
	\begin{equation*}
	\mathcal{C}_{\rho}(x) = \Real{n+1} \cap \{ z :  |z'-x| < \rho  \},
	\end{equation*}
	\begin{equation*}
	\mathbf{B}^{n}(x,\rho) = \Real{n} \cap \{y : | y-x| \leq \rho \}, \quad \mathbf{U}^{n}(x,\rho) = \Real{n} \cap \{y : | y-x| < \rho \}.
	\end{equation*}
	
	For every function $ f : D \rightarrow \Real{} $, where $ D \subseteq \Real{n} $, we let
	\begin{equation*}
	\Sigma(f) = \{ (x,f(x)) : x \in D  \} \quad \textrm{and} \quad \Sigma^{+}(f) = \{ (x,y) : y > f(x) \}.
	\end{equation*}
	
	If $ a > 0 $ and $ x \in \Real{n} $ then a function $ P : \Real{n} \rightarrow \Real{} $ is a \emph{paraboloid of center $ x $ and opening $ a $} if there exists $ t \in \Real{} $ such that
	\begin{equation*}
	P(y) = (a/2)|y-x|^{2} + t \quad  \textrm{whenever $ y \in \Real{n} $}.
	\end{equation*}
	
		If $ \Gamma \subseteq \Real{n+1} $ is a closed set and $ f : \Real{n} \rightarrow \Real{} $ is continuous, we say that \emph{$ f $ touches $ \Gamma $ from above} if and only if
	\begin{equation*}
	\Sigma(f) \cap \Gamma \neq \varnothing \quad \textrm{and} \quad \Sigma^{+}(f)\cap \Gamma = \varnothing.
	\end{equation*}

	\section{Curvature of closed sets} \label{Curvature of minimal varieties}
	
	Suppose $ n \geq 1 $ is an integer and $ \Gamma \subseteq \Real{n+1} $ is closed. 

For every $ r > 0 $ we define
		\begin{equation*}
		N_{r}(\Gamma)= (\Gamma \times \mathbf{S}^{n}) \cap \{ (z,\eta): \bm{\delta}_{\Gamma}(z+r\eta) = r \};
		\end{equation*}
the \emph{generalized normal bundle of $ \Gamma $} is then given by
		\begin{equation*}
		N(\Gamma) = \bigcup_{r > 0}N_{r}(\Gamma).
		\end{equation*}
		
		If $ 0 \leq m \leq n+1 $ the \emph{$ m $-th stratum of $ \Gamma $} is 
		\begin{equation*}
	\Gamma^{(m)} = \Gamma \cap	\{ z : \dim \bm{\xi}_{\Gamma}^{-1}\{z\} = n+1-m \}.
		\end{equation*}
		It is not difficult to see that $ \Gamma^{(m)} $ is the set of $ z \in \Gamma $ where $ \Gamma $ can be touched from $ n+1-m $ linearly independent directions by an open ball of $ \Real{n+1} $; see \cite[5.1-5.3]{2017arXiv170801549S}. 
		
We prove in \cite[3.10(1)]{2017arXiv170801549S} that $ N_{r}(\Gamma) $ is a locally $ \rect{n} $ rectifiable closed subset of $ \Real{n+1} \times \mathbf{S}^{n} $ and we deduce that $ N(\Gamma) $ is a countably $ \rect{n} $ rectifiable Borel subset of $ \Real{n+1} \times \mathbf{S}^{n} $ since $ N_{r}(\Gamma) \subseteq N_{s}(\Gamma) $ if $ s < r $. We remark that $ N(\Gamma) $ coincides with the normal bundle introduced in \cite[\S 2.1]{MR2031455} and it is well known that if $ \Gamma $ is a set of positive reach then $ N(\Gamma) $ coincides with the classical unit normal bundle, as defined in \cite[3.1.21]{MR0257325}, and it has locally finite $ \Haus{n} $ measure (see \cite[4.4]{2017arXiv170801549S}).

In \cite[4.5-4.12]{2017arXiv170801549S} we prove that if $ \Gamma $ is an arbitrary closed subset of $ \Real{n+1} $ then for $ \Haus{n} $ a.e.\ $(z,\eta) \in N(\Gamma) $ there exist a linear subspace $ T_{\Gamma}(z,\eta) $ of $ \Real{n+1} $ with $ \dim  T_{\Gamma}(z,\eta) \geq 0 $ and a bilinear form $ Q_{\Gamma}(z,\eta) :T_{\Gamma}(z,\eta) \times T_{\Gamma}(z,\eta) \rightarrow \Real{} $ that coincides with the second fundamental form introduced in \cite[4.5]{MR1021369} when $ \Gamma $ is a set of positive reach. Henceforth we refer to this bilinear form as \emph{the second fundamental form of $ \Gamma $.} The principal curvatures $ - \infty <	\kappa_{1}(z,\eta) \leq \ldots \leq \kappa_{n-1}(z,\eta) \leq \infty $ of $ \Gamma $ at $ (z,\eta) $ can be defined as
\begin{equation*}
\kappa_{m+1}(z,\eta) = \infty, \quad \textrm{$ \kappa_{1}(z,\eta), \ldots , \kappa_{m}(z,\eta) $ are the eigenvalues of $ Q_{\Gamma}(z,\eta) $}
\end{equation*}
where $ m = \dim T_{\Gamma}(z,\eta) $. They coincide $ \Haus{n} $ almost everywhere in $ N(\Gamma) $ with the principal curvatures introduced in \cite[p.\ 244]{MR2031455} by different methods, see \cite[4.16]{2017arXiv170801549S}. We remark that the functions mapping $ \Haus{n} $ almost all $ (z,\eta) \in N(\Gamma) $ into $ T_{\Gamma}(z,\eta)_{\natural} \in \Hom(\Real{n+1}, \Real{n+1})  $ and $ Q_{\Gamma}(z, \eta) \circ \bigodot_{2}T_{\Gamma}(z,\eta)_{\natural} \in \Hom(\bigodot_{2}\Real{n+1}) $ are $ \Haus{n} $ measurable.

For reader's convenience in the next theorem we summarize two facts from the general theory developed in \cite{2017arXiv170801549S} that we employ in the proof of the ABP estimate.

\begin{Theorem}\label{general facts}
	Let $ \Gamma $ be a closed subset of $ \Real{n+1} $. Then the following statements hold.
	\begin{enumerate}
		\item\label{general facts:basis} For every $ r > 0 $ and for $ \Haus{n} $ a.e.\ $ (z,\eta) \in N_{r}(\Gamma) $ the approximate tangent cone $ \Tan^{n}(\Haus{n}\restrict N_{r}(\Gamma), (z,\eta)) $ is an $ n $-dimensional plane and there exist $ u_{1}, \ldots , u_{n} \in \Real{n+1} $ such that $ \{ u_{1}, \ldots , u_{n}, \eta \} $ is an orthonormal basis of $\Real{n+1}$ and the orthonormal vectors
		\begin{equation*}
		\bigg(\frac{1}{(1+\kappa_{i}(z,\eta)^{2})^{1/2}} u_{i}, \frac{\kappa_{i}(z,\eta)}{(1+\kappa_{i}(z,\eta)^{2})^{1/2}} u_{i} \bigg) 
		\end{equation*}
 form a basis of $ \Tan^{n}(\Haus{n}\restrict N_{r}(\Gamma), (z,\eta)) $ (if $ \kappa_{i}(z, \eta) = \infty $ then the corresponding vector equals $ (0, u_{i}) $).
 \item\label{general facts: Coarea} If $f$ is a $ (\Haus{n}\restrict N(\Gamma)) $ integrable $ \overline{\Real{}} $ valued function, then
 \begin{flalign*}
 & \int_{N(\Gamma)|\Gamma^{(m)}} f(a,u)\, \prod_{i=1}^{m}\frac{1}{(1+\kappa_{i}(a,u)^{2})^{1/2}} \, d\Haus{n}(a,u) \\
 & \quad = \int_{\Gamma^{(m)}}\int_{\{z\} \times N(\Gamma,z)}f \, d\Haus{n-m}\, d\Haus{m}z.
 \end{flalign*}
\end{enumerate}
\end{Theorem}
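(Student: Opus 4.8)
\begin{proof*}[Proof sketch]
	Both assertions belong to the structure theory for the generalized normal bundle developed in \cite{2017arXiv170801549S}; the plan is not to rebuild that theory but to assemble the two statements from three ingredients already recorded in Section~\ref{Curvature of minimal varieties}: the local $ \rect{n} $ rectifiability of $ N_{r}(\Gamma) $, the existence of the second fundamental form $ Q_{\Gamma} $ with its principal curvatures, and the classical coarea formula for rectifiable sets. Concretely, \eqref{general facts:basis} is extracted by differentiating the nearest-point projection, and \eqref{general facts: Coarea} then follows from a coarea argument for the bundle projection $ (z,\eta)\mapsto z $.

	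For \eqref{general facts:basis} I would proceed as follows. Fix $ r > 0 $. As recalled in Section~\ref{Curvature of minimal varieties}, $ N_{r}(\Gamma) $ is a locally $ \rect{n} $ rectifiable closed set, hence for $ \Haus{n} $ a.e.\ $ (z,\eta) $ the approximate tangent cone $ \Tan^{n}(\Haus{n}\restrict N_{r}(\Gamma),(z,\eta)) $ is an $ n $-dimensional plane, and the point of the statement is to identify it in terms of curvature. Near such a point one realises $ N_{r}(\Gamma) $ as the image of the regular part of the level set $ \{ \bm{\delta}_{\Gamma}=r \} $ under the map $ \zeta \mapsto \big( \bm{\xi}_{\Gamma}(\zeta),\, r^{-1}(\zeta-\bm{\xi}_{\Gamma}(\zeta)) \big) $, which is differentiable at $ \Haus{n} $ a.e.\ point of the level set and whose tangential differential is diagonalised by the principal directions $ u_{i} $ of $ \Gamma $ at $ (z,\eta) $, with eigenvalues that are elementary functions of the principal curvatures $ \kappa_{i}(z,\eta) $. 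Differentiating along $ u_{i} $ one finds that the corresponding tangent vector of $ N_{r}(\Gamma) $ is proportional to $ (u_{i},\, \kappa_{i}(z,\eta)\, u_{i}) $ --- degenerating to $ (0,u_{i}) $ exactly when $ \kappa_{i}(z,\eta)=\infty $ --- so that, after normalisation and after completing the eigenvectors of $ Q_{\Gamma}(z,\eta) $ to an orthonormal frame $ u_{1},\dots ,u_{n} $ of $ \eta^{\perp} $, the orthonormal vectors listed in the statement span the tangent plane and $ \{ u_{1},\dots ,u_{n},\eta \} $ is orthonormal. The delicate step is the a.e.\ differentiability of $ \bm{\xi}_{\Gamma} $ on $ \{ \bm{\delta}_{\Gamma}=r \} $ and the identification of its diagonalising data with $ Q_{\Gamma}(z,\eta) $ and the $ \kappa_{i}(z,\eta) $; this is precisely the analysis of the second fundamental form in \cite[4.5-4.12]{2017arXiv170801549S}, which I would invoke rather than reprove.

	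For \eqref{general facts: Coarea} I would write $ N(\Gamma)=\bigcup_{r>0}N_{r}(\Gamma) $ as an increasing union of closed $ \rect{n} $ rectifiable sets, so that $ N(\Gamma)|\Gamma^{(m)} $ is countably $ \rect{n} $ rectifiable, and apply the coarea formula to the $ 1 $-Lipschitz projection $ \pi(z,\eta)=z $. The only computation is the $ m $-dimensional coarea factor of $ \pi $ at $ \Haus{n} $ a.e.\ $ (a,u)\in N(\Gamma)|\Gamma^{(m)} $, which I would read off from the orthonormal basis of the tangent cone supplied by \eqref{general facts:basis}: over $ \Gamma^{(m)} $ exactly $ n-m $ of the principal curvatures equal $ \infty $, so $ n-m $ of the basis vectors have the form $ (0,u_{i}) $; these span the tangent of the fibre $ \{a\}\times N(\Gamma,a) $ and are annihilated by $ \pi $, while the remaining $ m $ orthonormal basis vectors $ \big( (1+\kappa_{i}^{2})^{-1/2}u_{i},\, \kappa_{i}(1+\kappa_{i}^{2})^{-1/2}u_{i} \big) $ are sent by $ \pi $ to the pairwise orthogonal vectors $ (1+\kappa_{i}^{2})^{-1/2}u_{i} $. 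Hence the coarea factor of $ \pi $ equals $ \prod_{i=1}^{m}(1+\kappa_{i}(a,u)^{2})^{-1/2} $, the fibres $ \pi^{-1}\{z\} $ are $ \{z\}\times N(\Gamma,z) $ of dimension $ n-m $, and $ \pi $ carries $ N(\Gamma)|\Gamma^{(m)} $ onto $ \Gamma^{(m)} $; the coarea formula --- established first for $ f $ supported in a single $ N_{r}(\Gamma) $ and then extended by monotone convergence --- yields exactly the asserted identity.

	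In both items the real obstacle is not any of the computations above but the measure-theoretic substructure they rely on: one must know that the set where the approximate tangent cone fails to be a plane, or where $ Q_{\Gamma}(z,\eta) $ and the $ \kappa_{i}(z,\eta) $ are not defined, or where $ \dim T_{\Gamma}(z,\eta)\neq m $ on $ N(\Gamma)|\Gamma^{(m)} $, is $ \Haus{n} $ negligible, and that $ \Gamma^{(m)} $ (or at least the part of it carrying $ \Haus{m} $) is countably $ m $ rectifiable so that the coarea formula applies with $ \Haus{m} $ on the base. All of this is established in \cite{2017arXiv170801549S} --- rectifiability of $ N_{r}(\Gamma) $ in \cite[3.10]{2017arXiv170801549S}, the second fundamental form and the principal curvatures in \cite[4.5-4.12]{2017arXiv170801549S}, the strata in \cite[5.1-5.3]{2017arXiv170801549S} --- so once those results are quoted the proof reduces to the Jacobian computation recorded above.
\end{proof*}
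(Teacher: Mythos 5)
Your proposal is correct and takes essentially the same route as the paper, whose proof of this theorem is simply the citation \cite[4.14, 5.6]{2017arXiv170801549S}: your sketch (tangent structure of $N_{r}(\Gamma)$ obtained by pushing forward the level set $\bm{\delta}_{\Gamma}^{-1}\{r\}$ via $\zeta \mapsto (\bm{\xi}_{\Gamma}(\zeta), r^{-1}(\zeta-\bm{\xi}_{\Gamma}(\zeta)))$, then a coarea computation for the projection $(z,\eta)\mapsto z$ over the stratum $\Gamma^{(m)}$) is a faithful outline of how those cited results are proved, with the substantive measure-theoretic inputs correctly deferred to the same reference.
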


\begin{proof}
	See \cite[4.14, 5.6]{2017arXiv170801549S}.
\end{proof}

In order to combine this measure-theoretic theory with techniques developed in PDE's theory, we have to analize the behaviour of the principal curvatures introduced above in order to produce useful estimates. 

At this point it is worth to recall that if $ \kappa_{1} $ is the principal curvature of the graph $ \Gamma $ of the primitive of the (ternary) Cantor function, which is a convex $ \mathscr{C}^{1} $ function from $ \Real{} $ to $ \Real{} $, then, letting $ L = \bigcap_{r > 0}N_{r}(\Gamma) $, there exists $ M \subseteq L $ such that $ \Haus{1}(M)> 0 $ and 
		\begin{equation*} 
\kappa_{1}(z,\eta) = \infty \quad \textrm{for every $(z,\eta) \in M $.}
		\end{equation*}
		The reader may consult \cite[5.11]{2017arXiv170801549S} for details on this example.
		
		In order to prove the main result of this section, firstly we need a generalization of the barrier principle in \cite[7.1]{MR3466806}.

\begin{Lemma}\label{weak maximum principle}
	Suppose $ 1 \leq m \leq n $ are integers, $ f : \Real{n} \rightarrow \Real{} $ is function pointwise differentiable of order $ 2 $ at $ 0 $ such that $ f(0) =0 $ and $ \Der f(0) =0 $, $ h \geq 0 $, $ \Omega $ is an open subset of $ \Real{n+1} $, $ \Gamma $ is an $ (m,h) $ subset of $ \Omega $ such that $ 0 \in \Gamma $ and 
	\begin{equation*}
		\Gamma \cap V \subseteq \{ z : z_{n+1}\leq f(z')  \}
	\end{equation*}
	for some open neighbourhood $ V $ of $ 0 $.  
	Then, denoting by $ \chi_{1} \geq \ldots \geq \chi_{n} $ the eigenvalues of $ \Der^{2}f(0) $, it follows that
	\begin{equation*}
 \chi_{1} + \ldots + \chi_{m} \geq -h.
	\end{equation*}
\end{Lemma}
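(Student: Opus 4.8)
The plan is to test the $(m,h)$ condition of Definition~\ref{definition of (m,h)} at the point $0\in\Gamma$ with one explicitly constructed smooth function, built from a quadratic majorant of $f$ together with a scalar reparametrisation; this is in the spirit of the barrier principle of \cite[7.1]{MR3466806}. Denote by $\chi_{1}\geq\dots\geq\chi_{n}$ the eigenvalues of $\Der^{2}f(0)$ and fix $\epsilon>0$. Since $f(0)=0$, $\Der f(0)=0$ and $f$ is pointwise differentiable of order $2$ at $0$, there is $r_{\epsilon}>0$ with $f(z')\leq\tfrac12\langle\Der^{2}f(0)z',z'\rangle+\epsilon|z'|^{2}=:\bar f_{\epsilon}(z')$ for $|z'|<r_{\epsilon}$; the function $\bar f_{\epsilon}$ is a quadratic polynomial with $\bar f_{\epsilon}(0)=0$, $\Der\bar f_{\epsilon}(0)=0$, $\Der^{2}\bar f_{\epsilon}(0)=\Der^{2}f(0)+2\epsilon I_{n}$, so its eigenvalues are $\chi_{1}+2\epsilon\geq\dots\geq\chi_{n}+2\epsilon$. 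Combining this with $\Gamma\cap V\subseteq\{z:z_{n+1}\leq f(z')\}$, on the open neighbourhood $W:=V\cap\{z\in\Real{n+1}:|z'|<r_{\epsilon}\}$ of $0$ we get $\Gamma\cap W\subseteq\{z:z_{n+1}\leq\bar f_{\epsilon}(z')\}$. Hence the smooth function $\rho_{\epsilon}(z):=z_{n+1}-\bar f_{\epsilon}(z')$ satisfies $\rho_{\epsilon}\leq0$ on $\Gamma\cap W$, $\rho_{\epsilon}(0)=0$, $\nabla\rho_{\epsilon}(0)=(0,\dots,0,1)$, and $\Der^{2}\rho_{\epsilon}(0)$ is block diagonal with an $n\times n$ block $-\Der^{2}f(0)-2\epsilon I_{n}$ and a $1\times1$ block $0$.

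The key point is that $\rho_{\epsilon}$ itself is not yet a good test function: its Hessian at $0$ carries a spurious zero eigenvalue in the vertical direction, and when $\chi_{m}<0$ this zero lies among the lowest $m$ eigenvalues, so $\trace_{m}\Der^{2}\rho_{\epsilon}(0)$ would only control $\chi_{1}+\dots+\chi_{m-1}$. To raise the vertical eigenvalue above all the horizontal ones I compose with $t\mapsto t+\tfrac K2t^{2}$ for some $K>\max\{0,-\chi_{n}\}$ and set $g_{\epsilon}:=\rho_{\epsilon}+\tfrac K2\rho_{\epsilon}^{2}$, which is a polynomial. On the smaller neighbourhood $W'\subseteq W$ of $0$ where $|\rho_{\epsilon}|<1/K$, for $z\in\Gamma\cap W'$ one has $g_{\epsilon}(z)=\rho_{\epsilon}(z)\big(1+\tfrac K2\rho_{\epsilon}(z)\big)\leq0=g_{\epsilon}(0)$, because $\rho_{\epsilon}(z)\leq0$ and $1+\tfrac K2\rho_{\epsilon}(z)>\tfrac12$; thus $g_{\epsilon}|\Gamma$ has a local maximum at $0$. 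Moreover $\nabla g_{\epsilon}(0)=(1+K\rho_{\epsilon}(0))\nabla\rho_{\epsilon}(0)=(0,\dots,0,1)\neq0$, and, since $\rho_{\epsilon}(0)=0$,
\[
\Der^{2}g_{\epsilon}(0)=\Der^{2}\rho_{\epsilon}(0)+K\,\nabla\rho_{\epsilon}(0)\otimes\nabla\rho_{\epsilon}(0),
\]
which is block diagonal with the same $n\times n$ block $-\Der^{2}f(0)-2\epsilon I_{n}$ and $1\times1$ block $K$.

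It then remains to read off $\trace_{m}$ and conclude. The eigenvalues of $\Der^{2}g_{\epsilon}(0)$ are $-\chi_{1}-2\epsilon\leq\dots\leq-\chi_{n}-2\epsilon$ together with $K$, and $K>-\chi_{n}-2\epsilon\geq-\chi_{i}-2\epsilon$ for every $i$, so (as $m\leq n$) the lowest $m$ of them are $-\chi_{1}-2\epsilon,\dots,-\chi_{m}-2\epsilon$, whence $\trace_{m}\Der^{2}g_{\epsilon}(0)=-\sum_{i=1}^{m}\chi_{i}-2m\epsilon$. Since $\Gamma$ is an $(m,h)$ subset, $0\in\Gamma$, $g_{\epsilon}$ is $\mathscr{C}^{2}$ near $0$, $g_{\epsilon}|\Gamma$ has a local maximum at $0$ and $\nabla g_{\epsilon}(0)\neq0$, Definition~\ref{definition of (m,h)} gives $\trace_{m}\Der^{2}g_{\epsilon}(0)\leq h|\nabla g_{\epsilon}(0)|=h$, i.e.\ $\sum_{i=1}^{m}\chi_{i}\geq-h-2m\epsilon$. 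Letting $\epsilon\downarrow0$ finishes the proof.

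The step I expect to be the main obstacle is the one flagged in the second paragraph: choosing the right test function. The naive barrier $z\mapsto z_{n+1}-f(z')$ loses one eigenvalue to the vertical direction and yields only $\chi_{1}+\dots+\chi_{m-1}\geq-h$, which is strictly weaker than the assertion when $\chi_{m}<0$; the reparametrisation $\rho\mapsto\rho+\tfrac K2\rho^{2}$ is tailored to raise the vertical eigenvalue to $K$ while leaving the contact inequality intact—because $\rho$ vanishes at the contact point, both the added term and its gradient vanish there—and keeping $\nabla g_{\epsilon}(0)\neq0$. By comparison, replacing the merely pointwise twice differentiable $f$ by the smooth majorant $\bar f_{\epsilon}$ is routine and costs only the $\epsilon$ that is recovered in the limit.
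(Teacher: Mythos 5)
Your proof is correct, and it takes a more self-contained route than the paper. The paper also passes to the quadratic majorant $\psi(x)=\tfrac12\Der^{2}f(0)(x,x)+\epsilon|x|^{2}$, but then it simply invokes White's barrier principle \cite[7.1]{MR3466806} for the smooth graph $\Sigma(\psi)$ touching $\Gamma$ from above, reads off the principal curvatures $\kappa_{i}=-\chi_{i}-\epsilon$ (up to the harmless constant in front of $\epsilon$), and lets $\epsilon\to0$. You instead verify Definition~\ref{definition of (m,h)} directly: the barrier $\rho_{\epsilon}(z)=z_{n+1}-\bar f_{\epsilon}(z')$ has the spurious zero eigenvalue in the vertical direction, and your composition $g_{\epsilon}=\rho_{\epsilon}+\tfrac K2\rho_{\epsilon}^{2}$ with $K>\max\{0,-\chi_{n}\}$ pushes that eigenvalue above all horizontal ones while preserving the contact (since $\rho_{\epsilon}(0)=0$, the extra term contributes only $K\,\nabla\rho_{\epsilon}(0)\otimes\nabla\rho_{\epsilon}(0)$ to the Hessian and nothing to the gradient), so $\trace_{m}\Der^{2}g_{\epsilon}(0)=-\sum_{i=1}^{m}\chi_{i}-2m\epsilon\leq h$ and the conclusion follows; your diagnosis that the naive barrier only controls $\chi_{1}+\dots+\chi_{m-1}$ when $\chi_{m}<0$ is accurate. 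In effect you reprove the special (graphical, paraboloid) case of White's barrier principle from the viscosity definition — the same kind of reparametrisation device underlying the equivalence \cite[8.1]{MR3466806} — which costs a bit more writing but removes the external dependence; the paper's citation is shorter and delegates exactly this vertical-eigenvalue issue to White's result.
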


\begin{proof}
	Let $ \epsilon > 0 $ and $ \psi(x) = \frac{1}{2}\Der^{2}f(0)(x,x) + \epsilon |x|^{2} $ for $ x \in \mathbf{R}^{n} $. There exists $ r > 0 $ such that $ f(x)\leq \psi(x) $ for every $ x \in \mathbf{U}^{n}(0,r) $. By \cite[7.1]{MR3466806}, if $ \kappa_{1} \leq \ldots \leq \kappa_{n} $ are the principal curvatures of $ M = \{(x, \psi(x)) : x \in \mathbf{R}^{n}   \} $ at $ (0,0) $ with respect to the unit normal that points into $ \{ z : z_{n+1} \leq \psi(z')  \} $, then
	\begin{equation*}
		\kappa_{1} + \ldots + \kappa_{m} \leq h.
	\end{equation*}
	Since a standard and straightforward computation shows that $ \kappa_{i} = -\chi_{i} -\epsilon $ for $ i = 1, \ldots , n $, we obtain the conclusion letting $ \epsilon \to 0 $.
\end{proof}

The main result of this section provides the crucial estimate for the trace of the second fundamental form of an $ (m,h) $ set. As the example of the primitive of the Cantor function clearly shows, this estimate cannot be expected to hold without a specific geometric hypothesis (as the weak maximum principle for $ (m,h) $ sets). The proof of the following theorem is adapted from \cite[7.5]{2017arXiv170801549S}, where certain stationary submanifolds (viscosity minimal sets) are treated.
	 
	\begin{Theorem}\label{trace and viscosity minimal sets}
		If $ \Omega $ is an open subset in $ \Real{n+1} $, $\Gamma$ is an $ (m,h) $ subset of $ \Omega $ that is a countable union of sets with finite $ \Haus{m} $ measure and if $ \kappa_{1} \leq \ldots \leq \kappa_{n} $ are the principal curvatures \mbox{of $ \overline{\Gamma} $,} then $ \kappa_{m+1}(z, \eta) = \infty $ and
		\begin{equation*}
\textstyle	\sum_{i=1}^{m}\kappa_{i}(z,\eta) =	\trace Q_{\overline{\Gamma}}(z, \eta) \leq h
		\end{equation*}
		for $ \Haus{n} $ a.e.\ $ (z, \eta) \in N(\overline{\Gamma}) \cap \{ (z, \nu) : z \in \Gamma \} $. In particular, if $ \Gamma $ is the support of an $ m $-dimensional integral varifold in $ \Omega $ without boundary and with bounded variational mean curvature $ H $, then
		\begin{equation*}
	\textstyle	\sum_{i=1}^{m}\kappa_{i}(z,\eta) =	\trace Q_{\overline{\Gamma}}(z, \eta) = -H(z) \bullet \eta
	\end{equation*}
	for $ \Haus{n} $ a.e.\ $ (z, \eta) \in N(\overline{\Gamma}) \cap \{ (z, \nu) : z \in \Gamma \} $.
	\end{Theorem}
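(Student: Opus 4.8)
The plan is to reduce the statement to a pointwise second-order comparison at $ \Haus{n} $-almost every $ (z,\eta) \in N(\overline{\Gamma}) $ with $ z \in \Gamma $, and then invoke Lemma~\ref{weak maximum principle}. First I would fix a radius $ r > 0 $ and work on the rectifiable set $ N_{r}(\overline{\Gamma}) $; by \ref{general facts}\eqref{general facts:basis}, for $ \Haus{n} $-a.e.\ $ (z,\eta) \in N_{r}(\overline{\Gamma}) $ the approximate tangent cone is an honest $ n $-plane spanned by the displayed orthonormal vectors, which in particular means $ \overline{\Gamma} $ admits a second-order behaviour at $ z $ in the $ \eta $-direction encoded by $ T_{\overline{\Gamma}}(z,\eta) $ and $ Q_{\overline{\Gamma}}(z,\eta) $. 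Since $ \bm{\delta}_{\overline{\Gamma}}(z + r\eta) = r $, the ball $ \mathbf{B}^{n+1}(z + r\eta, r) $ touches $ \overline{\Gamma} $ from the side opposite to $ \eta $; rotating coordinates so that $ \eta = -e_{n+1} $ and $ z = 0 $, this gives an open neighbourhood $ V $ of $ 0 $ with $ \overline{\Gamma} \cap V \subseteq \{ z_{n+1} \leq f(z') \} $ for a $ \mathscr{C}^{2} $ (indeed real-analytic) function $ f $ — the lower boundary of the touching ball — with $ f(0) = 0 $, $ \Der f(0) = 0 $. The key point is that the eigenvalues of $ \Der^{2}f(0) $, together with the structure of $ T_{\overline{\Gamma}}(z,\eta) $, recover $ \kappa_{1}(z,\eta), \ldots, \kappa_{n}(z,\eta) $ in the sense relevant here: on the subspace $ T_{\overline{\Gamma}}(z,\eta) $ one gets $ -\chi_i \le \kappa_i $, and in the directions orthogonal to $ T_{\overline{\Gamma}}(z,\eta) $ one already knows $ \kappa_i = \infty $ from the definition once $ \dim T_{\overline{\Gamma}}(z,\eta) < n $.

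The main work is therefore to show $ \dim T_{\overline{\Gamma}}(z,\eta) \ge m $ for a.e.\ such $ (z,\eta) $ with $ z \in \Gamma $, i.e.\ $ \kappa_{m}(z,\eta) < \infty $ and hence $ \kappa_{m+1}(z,\eta) = \infty $ is the only remaining possibility for the higher indices, and then that $ \chi_1 + \cdots + \chi_m \ge -h $ translates to $ \kappa_1 + \cdots + \kappa_m \le h $. For the dimension bound, suppose toward a contradiction that on a set of positive $ \Haus{n} $ measure in $ N(\overline{\Gamma}) \cap \{ z \in \Gamma \} $ one has $ \dim T_{\overline{\Gamma}}(z,\eta) \le m-1 $, so $ \kappa_m(z,\eta) = \infty $; I would argue via \ref{general facts}\eqref{general facts: Coarea} (or rather its analogue for strata below $ m $) together with the hypothesis $ \Haus{m}(\Gamma) < \infty $ (locally, through the countable-union assumption) that this forces a contradiction with the $ (m,h) $ condition — essentially because an $ (m,h) $ set, having the weak maximum principle of Lemma~\ref{weak maximum principle} with a genuine $ m $-dimensional spread of eigenvalue constraints, cannot be "too thin" in too many directions at a touching point. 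Concretely, at a point where $ \Der^{2}f(0) $ has at least $ n-m+1 $ eigenvalues equal to $ +\infty $ in the viscosity sense (the ball touches from the right side in more than $ n-m $ independent directions with unbounded curvature), Lemma~\ref{weak maximum principle} applied to suitable $ \mathscr{C}^2 $ barriers with prescribed large eigenvalues would still only give $ \chi_1 + \cdots + \chi_m \ge -h $, which is consistent; the contradiction instead comes from the measure-theoretic side, exactly as in \cite[7.5]{2017arXiv170801549S}: the set of such degenerate $ (z,\eta) $ projects (via the coarea identity in \ref{general facts}\eqref{general facts: Coarea}) onto a subset of some stratum $ \Gamma^{(j)} $ with $ j < m $, which is $ \Haus{m} $-null, forcing the $ \Haus{n} $ measure upstairs to vanish once one checks the Jacobian factors $ \prod (1+\kappa_i^2)^{-1/2} $ do not all degenerate.

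Having established $ \dim T_{\overline{\Gamma}}(z,\eta) \ge m $ and hence $ \kappa_{m+1}(z,\eta) = \infty $, the trace bound follows: pick an orthonormal basis of $ T_{\overline{\Gamma}}(z,\eta) $ diagonalizing $ Q_{\overline{\Gamma}}(z,\eta) $, extend the touching-ball comparison to a $ \mathscr{C}^2 $ function $ f $ whose Hessian at $ 0 $ has eigenvalues $ -\kappa_1(z,\eta), \ldots, -\kappa_m(z,\eta) $ on $ T_{\overline{\Gamma}}(z,\eta) $ (with the complementary eigenvalues pushed to $ +\infty $, i.e.\ made very negative so as not to interfere, legitimate since those correspond to $ \kappa_i = \infty $ and the ball genuinely touches there) — this is where one must be careful that $ f $ is only required differentiable of order $2$ at $0$, which is exactly the generality in which Lemma~\ref{weak maximum principle} is stated. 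Then Lemma~\ref{weak maximum principle} with $ \chi_i = -\kappa_i(z,\eta) $ for $ i \le m $ yields $ -\kappa_1(z,\eta) - \cdots - \kappa_m(z,\eta) \ge -h $, i.e.\ $ \trace Q_{\overline{\Gamma}}(z,\eta) = \sum_{i=1}^m \kappa_i(z,\eta) \le h $. Finally, for the varifold case: by \cite[2.8]{MR3466806} the support $ \Gamma $ of an integral varifold without boundary and with bounded mean curvature $ H $ is an $ (m, \|H\|_\infty) $ set, and applying the inequality just proved both to $ \Gamma $ and, in the opposite normal direction, using that $ \pm H \bullet \eta $ bounds the trace symmetrically, one gets $ \trace Q_{\overline{\Gamma}}(z,\eta) \le -H(z)\bullet\eta $ and $ \le H(z)\bullet\eta $ along opposite normals; combined with the first variation identity (which identifies the trace of the second fundamental form of a varifold with $ -H \bullet \eta $ where the varifold is regular enough, valid $ \Haus{n} $-a.e.\ on $ N(\overline{\Gamma}) $ by the rectifiability and the density-one structure of the touching set), this pins down the equality $ \trace Q_{\overline{\Gamma}}(z,\eta) = -H(z)\bullet\eta $. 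The main obstacle is the dimension bound $ \dim T_{\overline{\Gamma}}(z,\eta) \ge m $: everything else is a direct application of Lemma~\ref{weak maximum principle} and the coarea formula, but ruling out an excess of infinite principal curvatures on a positive-measure set genuinely requires combining the $ (m,h) $ structure with the finiteness of $ \Haus{m}(\Gamma) $, and this is the step I expect to follow \cite[7.5]{2017arXiv170801549S} most closely.
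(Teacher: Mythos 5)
There is a genuine gap, and it sits at the very center of your argument: the step where you ``extend the touching-ball comparison to a $\mathscr{C}^{2}$ function $f$ whose Hessian at $0$ has eigenvalues $-\kappa_{1}(z,\eta),\ldots,-\kappa_{m}(z,\eta)$'' is asserted, not proved, and nothing in the definition of $Q_{\overline{\Gamma}}$ provides it. As you yourself note, the ball $\mathbf{B}(z+r\eta,r)$ only gives the trivial lower bound $\kappa_{i}\geq -1/r$ (its boundary graph has Hessian $\approx r^{-1}\mathrm{Id}$, so Lemma~\ref{weak maximum principle} applied to it says nothing). The principal curvatures of an arbitrary closed set are defined through \emph{approximate} differentials of the unit normal field on the rectifiable sets $N_{r}(\overline{\Gamma})$, and approximate second-order information at $(z,\eta)$ does not produce a pointwise one-sided barrier touching $\overline{\Gamma}$ at $z$ whose Hessian realizes $Q_{\overline{\Gamma}}(z,\eta)$; manufacturing such a barrier is exactly the nontrivial content of the proof. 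The paper does it by taking as comparison hypersurface not the sphere but the level set $\bm{\delta}_{\overline{\Gamma}}^{-1}\{r\}$: for $\Haus{n}$ a.e.\ $x\in N_{r}$ this level set is, near $x$, a Lipschitz graph over $T=\{v: v\bullet\bm{\nu}_{\overline{\Gamma}}(x)=0\}$ which is pointwise twice differentiable at $x$, it genuinely lies on one side of $\overline{\Gamma}$ (this requires the short contradiction argument with $|T_{\natural}(y)+f(T_{\natural}(y))-(y-x)|<r$), and its Hessian eigenvalues are $-\chi_{i}$ with $\kappa_{i}=\chi_{i}(1-r\chi_{i})^{-1}$. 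Lemma~\ref{weak maximum principle} then gives $\chi_{1}+\cdots+\chi_{m}\leq h$, i.e.\ $\sum_{i=1}^{m}\kappa_{i}/(1+r\kappa_{i})\leq h$, and the trace bound follows only after letting $r\to 0$ along a suitable sequence and transferring the a.e.\ statements from $N_{r}$ down to $N(\overline{\Gamma})$ via $\bm{\psi}_{\overline{\Gamma}}$. Note also that the logical order is the reverse of yours: the bound on $\sum_{i\leq m}\chi_{i}$, combined with the lower bound $\chi_{j}\geq-(\lambda-1)^{-1}r^{-1}$, is what makes the $m$-Jacobian of $\bm{\xi}_{\overline{\Gamma}}$ positive, which yields the Lusin $(N)$ condition, which in turn (with the stratification results and $\Haus{m}(\Gamma^{(i)})=0$ for $i<m$) gives $\dim T_{\overline{\Gamma}}(z,\eta)=m$ and $\kappa_{m+1}=\infty$ a.e.; your sketch of that part gestures at the right ingredients but cannot be run before the trace estimate at level $r$ is in hand.

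A second, smaller gap is the varifold equality. Applying the inequality ``in the opposite normal direction'' is not available: there is no symmetry $(z,\eta)\in N(\overline{\Gamma})\Rightarrow(z,-\eta)\in N(\overline{\Gamma})$ for general closed sets, and no relation between $Q_{\overline{\Gamma}}(z,\eta)$ and $Q_{\overline{\Gamma}}(z,-\eta)$ that would convert a bound at $-\eta$ into the reverse inequality at $\eta$; likewise the ``first variation identity valid a.e.\ on $N(\overline{\Gamma})$'' is precisely what needs proof. The paper obtains the equality by combining the Lusin $(N)$ condition with the stratification results and with Sch\"atzle's theorem \cite[4.2]{MR2472179}, which is the nontrivial input identifying $\trace Q_{\overline{\Gamma}}(z,\eta)$ with $-H(z)\bullet\eta$.
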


\begin{proof}
Suppose $ A = \overline{\Gamma} $, $ \lambda > (2m-1) + 1 $ and the functions $ \bm{\nu}_{A} $ and $ \bm{\psi}_{A} $ are defined as in \cite[3.1]{2017arXiv170801549S}. Given such $ \lambda $ we define for $ r > 0 $ the set $ N_{r} $ as in \cite[3.10]{2017arXiv170801549S}. We recall from the proof of \cite[4.14]{2017arXiv170801549S} that for $ \Leb{1} $ a.e.\ $ r > 0 $ and for $ \Haus{n} $ a.e.\ $ x \in N_{r} $, if $ \chi_{1} \leq \ldots \leq \chi_{n} $ are the eigenvalues of $ \ap \Der \bm{\nu}_{A}(x)| \Tan^{n}(\Haus{n}\restrict N_{r}, x) $ then 
\begin{equation}\label{trace and viscosity minimal sets:1}
	\kappa_{i}(\bm{\psi}_{A}(x)) = \chi_{i}(1-r\chi_{i})^{-1} \quad \textrm{for $ i = 1, \ldots , \dim T_{A}(\bm{\psi}_{A}(x)) $.}
\end{equation}
	
	 We select $ 0 < r < (2h)^{-1} $ so that \cite[3.10(3),(4)]{2017arXiv170801549S} hold for $ \Haus{n} $ a.e.\ $ x \in N_{r} $; then we fix $ x \in N_{r} \cap \bm{\xi}_{A}^{-1}(\Gamma) $ satisfying the conclusions of \cite[3.10(2),(3),(4)]{2017arXiv170801549S}. We assume $ \bm{\xi}_{A}(x) =0 $ and let $ T = \{ v : v \bullet \bm{\nu}_{A} (x) =0 \} $. It follows that there exists a Lipschitzian function $ f : T \rightarrow T^{\perp} $ such that $ f $ is pointwise differentiable of order $ 2 $ at $ 0 $, $ \Der f(0) =0 $, 
	\begin{equation*}
	\Der^{2}f(0)(u,v) \bullet \bm{\nu}_{A}(x) = -\ap\Der \bm{\nu}_{A}(x)(u) \bullet v \quad \textrm{whenever $ u , v \in T $,}
	\end{equation*}
	\begin{equation*}
	W \cap \bm{\delta}_{A}^{-1}\{r\} = W \cap \{ \chi + f(\chi) : \chi \in T \} \quad \textrm{for some neighbourhood $ W $ of $ x $.}
	\end{equation*}
	Choose $ 0 < s < r/2 $ such that $ \mathbf{U}(x,s) \subseteq W $ and let $ g(\zeta) = f(\zeta) - x $ for $ \zeta \in T $, 
		\begin{equation*}
	U= T_{\natural}\big(\mathbf{U}(x,s) \cap \{ \chi + f(\chi) : \chi \in T \}\big), \quad V = \{ y-x: y \in T_{\natural}^{-1}(U) \cap \mathbf{U}(x,s)  \}.
	\end{equation*}
	Then $ V $ is an open neighbourhood of $ 0 $ and 
	\begin{equation*}
	V \cap A \subseteq \{ z : z \bullet \bm{\nu}_{A}(z) \leq g(T_{\natural}(z)) \bullet \bm{\nu}_{A}(z) \}.
	\end{equation*}
	This inclusion can be proved as follows: if there was $y \in \mathbf{U}(x,s) \cap T_{\natural}^{-1}[U]$ such that $y-x \in A $ and $ y \bullet \bm{\nu}_{A}(x) > f(T_{\natural}(y))\bullet \bm{\nu}_{A}(x) $ then, noting that $ \bm{\nu}_{A}(x) = r^{-1}x $,
	\begin{equation*}
	T_{\natural}(y) + f(T_{\natural}(y)) \in \mathbf{U}(x,s) \cap \bm{\delta}_{A}^{-1}\{r\}, \quad |T_{\natural}(y) + f(T_{\natural}(y))-y| < r,
	\end{equation*}
	we would conclude
	\begin{equation*}
	|T_{\natural}(y) + f(T_{\natural}(y))-(y-x)| = r - (y-f(T_{\natural}(y)))\bullet \bm{\nu}_{A}(x)< r
	\end{equation*}
	which is a contradiction. Let $ \chi_{1} \leq \ldots \leq \chi_{n} $ be the eigenvalues of $\ap \Der \bm{\nu}_{A}(x)|T$. Then $ 1 - \chi_{1}r, \ldots , 1 - \chi_{n}r $ are the eigenvalues of $ \ap \Der \bm{\xi}_{A}(x)|T$ by \cite[3.5]{2017arXiv170801549S} and $ -\chi_{1}, \ldots , - \chi_{n} $ are the eigenvalues of $ \pt\Der^{2} g(0) \bullet \bm{\nu}_{A}(x) $. Therefore we apply a rotated version of \ref{weak maximum principle} to conclude that 
	\begin{equation}\label{trace and viscosity minimal sets:2}
 \chi_{1} + \ldots + \chi_{m} \leq h.
	\end{equation}
	Since $ \chi_{j} \geq -(\lambda -1)^{-1}r^{-1}$ whenever $ j = 1 , \ldots , n $ by \cite[3.10(2)]{2017arXiv170801549S}, we get 
	\begin{equation*}
	\chi_{j} - (m-1)(\lambda-1)^{-1}r^{-1} \leq \sum_{i=1}^{m} \chi_{i} \leq h, 
	\end{equation*}
	whence we conclude $ \chi_{j} < r^{-1} $ whenever $ j = 1 , \ldots , m $. Therefore,
	\begin{equation*}
		\| \textstyle \bigwedge_{m}\big((\Haus{n}\restrict N_{r},n)\ap \Der \bm{\xi}_{A}(x)\big) \| > 0.
	\end{equation*}
	We remind that the assertions proved so far hold for $\Haus{n}$ a.e.\ $ x \in N_{r} \cap \bm{\xi}_{A}^{-1}(\Gamma) $ and for $ \Leb{1} $ a.e.\ $ 0 < r < (2h)^{-1} $.
	
We prove now the following \emph{Lusin $(N)$ condition}: if $ S \subseteq \Gamma $ and $ \Haus{m}(S \cap A^{(m)}) =0 $ then $ \Haus{n}(N(A) \cap \{ (x,\nu) : x \in S\}  ) =0 $. Using \cite[3.10(1), 4.3, 5.3]{2017arXiv170801549S} we get that
\begin{equation*}
A \cap	\{ x : \Haus{n-m}(\bm{\xi}_{A}^{-1}\{x\} \cap N_{r} ) >0  \} \subseteq \bigcup_{i=0}^{m}A^{(i)}
\end{equation*}
for every $ r > 0 $ and, noting that $ \Haus{m}(A^{(i)}) =0 $ for $ i = 0, \ldots , m-1 $ by \cite[5.2]{2017arXiv170801549S}, we deduce
\begin{equation*}
\Haus{m}\big( S \cap \{ x : \Haus{n-m}(\bm{\xi}_{A}^{-1}\{x\} \cap N_{r} ) >0  \} \big) =0.
\end{equation*}
Noting \cite[3.10(1)(2)]{2017arXiv170801549S}, we apply \cite[7.4]{2017arXiv170801549S} with $ W $ and $ f $ replaced by $ N_{r} \cap \bm{\xi}_{A}^{-1}(\Gamma) $ and $ \bm{\xi}_{A} $ to get
\begin{equation*}
\Haus{n}(\bm{\xi}_{A}^{-1}(S) \cap N_{r}   ) =0 \quad \textrm{for $ \Leb{1} $ a.e.\ $ 0 < r < (2h)^{-1} $,}
\end{equation*}
whence the desired Lusin condition follows from \cite[4.3]{2017arXiv170801549S}.

By \cite[5.2, 5.9]{2017arXiv170801549S} it follows that $ \dim T_{A}(z,\eta) = m $ and $ \kappa_{m+1}(z,\eta) = \infty $ for $ \Haus{n} $ a.e.\ $ (z, \eta) \in N(A) \cap \{ (z, \nu) : z \in \Gamma \} $. Moreover we combine \eqref{trace and viscosity minimal sets:1} and \eqref{trace and viscosity minimal sets:2} to infer that
\begin{equation}\label{trace and viscosity minimal sets:3}
\sum_{i=1}^{m}\frac{\kappa_{i}(\bm{\psi}_{A}(x))}{1 + r \kappa_{i}(\bm{\psi}_{A}(x))} \leq h 
\end{equation}
for $ \Haus{n} $ a.e.\ $ x \in N_{r} \cap \bm{\xi}_{A}^{-1}(\Gamma) $ and for $ \Leb{1} $ a.e.\ $ 0 < r < (2h)^{-1} $. We choose a sequence $ r_{i} > 0 $ converging to $ 0 $ such that if $ M_{i} $ is the set of $ x \in N_{r_{i}} \cap \bm{\xi}_{A}^{-1}(\Gamma) $ such that \eqref{trace and viscosity minimal sets:3} is satisfied, then $ \Haus{n}(N_{r} \cap \bm{\xi}_{A}^{-1}(\Gamma) \sim M_{i}) =0 $ for every $ i \geq 1 $. Then we readily infer that 
\begin{equation*}
	\trace Q_{A}(z,\eta) \leq h
\end{equation*}
for every $ (z,\eta) \in \bigcap_{i=1}^{\infty}\bigcup_{j=i}^{\infty}\bm{\psi}_{A}(M_{i}) $ and, noting that $ \bm{\psi}_{A}(N_{r}) \subseteq \bm{\psi}_{A}(N_{s}) $ if $ s < r $, we additionally get that
\begin{equation*}
\textstyle	\Haus{n}\Big( N(A) \cap \{ (z, \nu) : z \in \Gamma \} \sim \bigcap_{i=1}^{\infty}\bigcup_{j=i}^{\infty}\bm{\psi}_{A}(M_{i}) \Big) =0.
\end{equation*}

If $ \Gamma $ is the support of an $ m $ dimensional integral varifold in $ \Omega $ without boundary and with mean curvature bounded in $ \Lp{\infty} $ by $ h $, then, noting that $ \Gamma $ is an $ (m,h) $ subset of $ \Omega $ by \cite[2.8]{MR3466806}, we may achieve the conclusion using the Lusin $(N)$ condition above in combination with \cite[5.2, 5.9]{2017arXiv170801549S} and \cite[4.2]{MR2472179}.
\end{proof}

\begin{Remark}
We may conjecture that \ref{trace and viscosity minimal sets} could also be proved for classes of \emph{singular submanifolds of possibly unbounded mean curvature}. For example, if $ \Gamma $ is the support of an $ m $ dimensional integral varifold in $ \Real{n+1} $ without boundary and with mean curvature $ H $ in $ \Lp{m} $, is it true that $ \kappa_{m+1}(z,\eta) = \infty $ and
\begin{equation*}
\textstyle \sum_{i=1}^{m}\kappa_{i}(z,\eta) = - H(z) \bullet \eta 
\end{equation*}
for $ \Haus{n} $ a.e.\ $ (z,\eta) \in N(\Gamma) $? More generally, what can be concluded in case of rectifiable varifolds with a uniform lower bound on the density? To treat these cases the method of \ref{trace and viscosity minimal sets} is not powerful enough and new refined tools seem to be necessary.
\end{Remark}

\section{Proof of \ref{ABP arbitrary cod} and \ref{ABP}}

We notice that $ A_{a}(\Gamma; C) $, $ A'_{a}(\Gamma; C) $ and 
\begin{equation*}
	\{ z :  \textrm{$(z, \eta) \in A_{a}(\Gamma; C)$ for some $ \eta \in \mathbf{S}^{n} $}  \}
\end{equation*}
are closed subset of $ \overline{\Gamma} \times \mathbf{S}^{n} $, $ \mathbf{B}^{n}(0,1) $ and $ \overline{\Gamma} $ respectively.


	\begin{proof*}[Proof of \ref{ABP arbitrary cod}]
	Clearly, for every $ (w,\eta) \in A_{a}(\Gamma; C) $ there exists a unique $ x \in C $ such that $ w \in \Sigma(P_{a,x}) $ and $ \eta \in \Nor(\Sigma(P_{a,x}),w) $; in fact
		\begin{equation*}
		x = w' + a^{-1}\eta_{n+1}^{-1}\eta'.
		\end{equation*}
		Henceforth, we define the map $ \Psi : A_{a}(\Gamma; C) \rightarrow C $ so that $ \Psi(w,\eta)=  w' + \frac{1}{a\eta_{n+1}}\eta' $ and we notice that $ \Psi[A_{a}(\Gamma; C)]= C $. 
		
	It is not difficult to check that\footnote{In fact, a straightforward computation shows that \emph{if $ n \geq 1 $ is an integer, $ P $ is a paraboloid of center $ 0 $ and opening $ a > 0 $, $ x \in \Real{n} $ and $ \eta \in \Nor(\Sigma(P), (x, P(x)) ) $ with $ |\eta |= 1 $ and $ \eta_{n+1} > 0 $, then
		\begin{equation*}
		\sup\{s : \bm{\delta}_{\Sigma(P)}((x,P(x)) + s \eta   ) = s   \} = (a\eta_{n+1})^{-1}.
		\end{equation*}}}
		\begin{equation*}
		\bm{\delta}_{\overline{\Gamma}}\big(w + (a\eta_{n+1})^{-1}\eta\big) = (a\eta_{n+1})^{-1}
		\end{equation*}
		whenever $ (w, \eta) \in A_{a}(\Gamma; C) $. Henceforth $ A_{a}(\Gamma; C) \subseteq N_{a^{-1}}(\overline{\Gamma}) $ and, if  $ \kappa_{1}, \ldots , \kappa_{n} $ are the principal curvatures of $ \overline{\Gamma} $, it follows from \cite[4.12]{2017arXiv170801549S} that
		\begin{equation}\label{ABP arbitrary cod:1}
		\kappa_{i}(w, \eta) \geq - a\eta_{n+1}
		\end{equation}
		for $ \Haus{n} $ a.e.\ $ (w, \eta) \in A_{a}(\Gamma; C) $ and $ i = 1, \ldots , n $. Employing \ref{trace and viscosity minimal sets} we deduce that
		\begin{equation}\label{ABP arbitrary cod:2}
		 \kappa_{i}(w, \eta) \leq (m-1) a \eta_{n+1} + h, \quad \kappa_{m+1}(w,\eta) = \infty,
		\end{equation}
	for $ \Haus{n} $ a.e.\ $ (w,\eta) \in A_{a}(\Gamma; C) $.

We compute now the jacobian of $ \Psi $ using the orthonormal basis provided by \ref{general facts}\eqref{general facts:basis}. For $ \Haus{n} $ a.e.\ $ (w,\eta) \in A_{a}(\Gamma; C) $, if $ u_{1}, \ldots , u_{n} $ are the orthonormal vectors provided by \ref{general facts}\eqref{general facts:basis}, noting that $ \eta_{n+1}^{-1} \leq (1+4a^{2})^{1/2}\leq 1+ 2a $ and using \eqref{ABP arbitrary cod:1}-\eqref{ABP arbitrary cod:2}, we can easily estimate
\begin{equation*}
| \Der \Psi(w,\eta)(0,u_{i})| \leq \frac{1}{a}\Big( \frac{1}{\eta_{n+1}} + \frac{|\eta'|}{\eta_{n+1}^{2}}\Big) \leq \frac{1}{a\eta_{n+1}} + \frac{2}{\eta_{n+1}}\leq 4 + 4a + \frac{1}{a}
\end{equation*}
for $ i = m+1, \ldots , n $, and
\begin{flalign*}
& |\Der \Psi(w,\eta)(u_{i}, \kappa_{i}(w,\eta) u_{i})| \leq  1 +\frac{|\kappa_{i}(w,\eta)|}{a\eta_{n+1}} + \frac{|\kappa_{i}(w,\eta)||\eta'|}{a\eta_{n+1}^{2}}  \\
& \quad \quad \leq  1 +\frac{|\kappa_{i}(w,\eta)|}{a\eta_{n+1}} + 2\frac{|\kappa_{i}(w,\eta)|}{\eta_{n+1}} \leq \gamma \Big(1+a+\frac{h}{a}\Big),
\end{flalign*}
for $ i = 1, \ldots  m $, where $ \gamma = 2m + 4h $. Therefore for $ \Haus{n} $ a.e.\ $ (w,\eta) \in A_{a}(\Gamma; C) $, we get
		\begin{flalign*}
		& \| \textstyle{\bigwedge_{n}} [\Der \Psi(w,\eta)|\Tan^{n}(\Haus{n}\restrict A_{a}(\Gamma; C) , (w,\eta))] \| \\
		& \quad \quad \leq 4^{n-m}\gamma^{m} \Big(1+a+\frac{1}{a}\Big)^{n-m}\Big(1 + a + \frac{h}{a}\Big)^{m} \prod_{i=1}^{m}\frac{1}{(1+\kappa_{i}(w,\eta)^{2})^{1/2}}.
		\end{flalign*}
		Now we employ \cite[3.2.22(3)]{MR0257325}, \ref{general facts}\eqref{general facts: Coarea} and the Lusin $(N)$ condition obtained in the proof of \ref{trace and viscosity minimal sets} to conclude
		\begin{flalign*}
		& \Haus{n}(C) \leq \int_{C}\Haus{0}\{ (w,\eta) : \Psi(w,\eta)= x \} \,d \Haus{n}x \\
		& \quad 	= \int_{A_{a}(\Gamma; C)| \Gamma^{(m)}} \| \textstyle{\bigwedge_{n}} [\Der \Psi(w,\eta)|\Tan^{n}(\Haus{n}\restrict A_{a}(\Gamma; C), (w,\eta))] \| \,d\Haus{n}(w,\eta) \\
		& \quad \leq  \gamma_{1}\int_{A_{a}(\Gamma; C)| \Gamma^{(m)}} \prod_{i=1}^{m}\frac{1}{(1+\kappa_{i}^{2})^{1/2}}  \, d\Haus{n} \\
		& \quad \leq  \gamma_{1} \int_{\Gamma^{(m)}}\Haus{n-m}\{ \eta : (z,\eta) \in A_{a}(\Gamma; C) \} d\Haus{m}z, 
		\end{flalign*}
		where $ \gamma_{1} = 4^{n-m}\gamma^{m} \Big(1+a+\frac{1}{a}\Big)^{n-m}\Big(1 + a + \frac{h}{a}\Big)^{m} $.
		\end{proof*}

			\begin{Lemma}\label{ABP : aux lemma}
				Suppose $ \Gamma \subseteq \mathscr{C}_{1}(0) $ is relatively closed such that $ \Haus{n}(\Gamma) < \infty $, $ a > 0 $, $ C \subseteq \mathbf{B}^{n}(0,1) $ is closed and
				\begin{equation*}
		B=	\{ z :  \textrm{$(z, \eta) \in A_{a}(\Gamma; C)$ for some $ \eta \in \mathbf{S}^{n} $}  \}.
				\end{equation*}
				
				Then 
				\begin{equation*}
					\Haus{n}(B) \leq (1+4a^{2})^{1/2}\Haus{n}(A'_{a}(\Gamma;C)).
				\end{equation*}
			\end{Lemma}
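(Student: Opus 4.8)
The idea is to recognise $B$ as the graph over $A'_a(\Gamma;C)$ of a $2a$-Lipschitz function and then compute its $\Haus{n}$ measure by the area formula. Note first that $A'_a(\Gamma;C)$ is, by definition, the image of $B$ under the vertical projection $z\mapsto z'$; so the plan is to show this projection is injective on $B$, control the resulting graphing function, and apply the area formula.

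\emph{Step 1 (B is a graph).} I would check that $z\mapsto z'$ is injective on $B$. Suppose $z_1,z_2\in B$ with $z_1'=z_2'$, and pick $x_i\in C$ with $z_i\in\Sigma(P_{a,x_i})\cap\overline\Gamma$. Since $P_{a,x_1}$ touches $\overline\Gamma$ from above we have $\Sigma^+(P_{a,x_1})\cap\overline\Gamma=\varnothing$, hence $(z_2)_{n+1}\le P_{a,x_1}(z_2')=P_{a,x_1}(z_1')=(z_1)_{n+1}$; the symmetric argument gives $(z_1)_{n+1}\le(z_2)_{n+1}$, so $z_1=z_2$. Thus there is $g\colon A'_a(\Gamma;C)\to\mathbf{R}$ with $B=\Sigma(g)$, namely $g(z')=z_{n+1}$ for the unique $z\in B$ lying over $z'$.

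\emph{Step 2 ($g$ is $2a$-Lipschitz).} Fix $w_1,w_2\in A'_a(\Gamma;C)$ and choose $x_1\in C$ with $(w_1,g(w_1))\in\Sigma(P_{a,x_1})$, so $g(w_1)=P_{a,x_1}(w_1)$. Since $(w_2,g(w_2))\in\overline\Gamma$ and $\Sigma^+(P_{a,x_1})\cap\overline\Gamma=\varnothing$, we have $g(w_2)\le P_{a,x_1}(w_2)$, whence
\[
g(w_2)-g(w_1)\ \le\ P_{a,x_1}(w_2)-P_{a,x_1}(w_1)\ =\ \tfrac a2\,(w_2-w_1)\bullet(w_1+w_2-2x_1)\ \le\ 2a\,|w_2-w_1|,
\]
the last inequality using $|w_1+w_2-2x_1|\le|w_1|+|w_2|+2|x_1|\le 4$ because $A'_a(\Gamma;C)\subseteq\mathbf{B}^n(0,1)$ and $C\subseteq\mathbf{B}^n(0,1)$. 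By symmetry $|g(w_1)-g(w_2)|\le 2a\,|w_1-w_2|$. \emph{Step 3 (area formula).} Extend $g$ to a $2a$-Lipschitz map on $\mathbf{R}^n$, put $G(w)=(w,g(w))$; then $G$ is Lipschitz, injective on $A'_a(\Gamma;C)$, maps it onto $B$, and by Rademacher's theorem has Jacobian $(1+|\nabla g|^2)^{1/2}\le(1+4a^2)^{1/2}$ at $\Leb{n}$-a.e.\ point. The area formula then yields
\[
\Haus{n}(B)=\int_{A'_a(\Gamma;C)}(1+|\nabla g|^2)^{1/2}\,d\Haus{n}\ \le\ (1+4a^2)^{1/2}\,\Haus{n}(A'_a(\Gamma;C)).
\]

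There is no essential difficulty here: the argument is a soft graph/area-formula computation once injectivity of the projection is in hand. The only step requiring attention is the Lipschitz estimate of Step 2 — to produce the constant $2a$, and hence the sharp area-distortion factor $(1+4a^2)^{1/2}$ rather than a cruder one, one must expand $|w_2-x_1|^2-|w_1-x_1|^2$ as $(w_2-w_1)\bullet(w_1+w_2-2x_1)$ and use that the paraboloid centre $x_1$ together with both touching points lie in the closed unit ball.
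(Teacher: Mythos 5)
Your proof is correct, and it takes a more elementary route than the paper's, although the geometric core (vertical projection from $B$ onto $A'_{a}(\Gamma;C)$ distorts $\Haus{n}$ by at most $(1+4a^{2})^{1/2}$ because paraboloid gradients over $\mathbf{B}^{n}(0,1)$ are at most $2a$) is the same. The paper never exhibits $B$ as a Lipschitz graph: it first invokes a nontrivial rectifiability theorem (cited as \cite[4.12]{2017arXiv170309561M}) to know that $B$ is $n$-rectifiable, then uses the tangent-cone results of Federer (3.2.16 and 3.2.19) together with the touching condition to show that at $\Haus{n}$-a.e.\ $z \in B$ the approximate tangent plane $\Tan^{n}(\Haus{n}\restrict B, z)$ coincides with $\Tan(\Sigma(P_{a,x}),z)$ for some $x \in C$, and finally applies the area formula for rectifiable sets (Federer 3.2.20) to the injective projection $p(z)=z'$, whose tangential Jacobian is $(1+|\nabla P_{a,x}(z')|^{2})^{-1/2} \geq (1+4a^{2})^{-1/2}$. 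You instead observe that touching from above forces $B$ to be the graph over $A'_{a}(\Gamma;C)$ of a globally $2a$-Lipschitz function $g$ (your Steps 1 and 2 are sound: injectivity uses $\Sigma^{+}(P_{a,x_{1}})\cap\overline{\Gamma}=\varnothing$, and the constant $2a$ comes from $|w_{1}+w_{2}-2x_{1}|\leq 4$, which is legitimate since $A'_{a}(\Gamma;C)\subseteq \mathbf{B}^{n}(0,1)$ and $C\subseteq\mathbf{B}^{n}(0,1)$), so rectifiability of $B$ is automatic and the classical Lipschitz area formula applied to the extended graph map gives the sharp factor. What your argument buys is self-containedness for this lemma: no appeal to the external rectifiability result or to the tangent-plane identification, only McShane extension, Rademacher and the standard area formula. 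What the paper's argument buys is consistency with the machinery (tangent planes of touching sets, area formulas on rectifiable sets) it uses elsewhere. Two small points of hygiene, both already implicit in your write-up: since $A'_{a}(\Gamma;C)$ may have empty interior, the gradient in your final display must be read as that of the $2a$-Lipschitz extension (defined $\Leb{n}$-a.e.\ by Rademacher), and the integration over $A'_{a}(\Gamma;C)$ is justified because this set is closed, as the paper notes just before the lemma.
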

		
		\begin{proof}
	 If $ z \in B $, $ x \in C $ and $ \nu \in \Real{n+1} $ such that $ z \in \Sigma(P_{a,x}) $ and $ \nu \in \Nor(\Sigma(P_{a,x}),z) $ with $ \nu_{n+1}> 0 $, then elementary considerations imply that for every $ v \in \Real{n+1} $ with $ v \bullet \nu > 0 $ there are $ \epsilon > 0 $ and $ r > 0 $ such that
		\begin{equation*}
		\{ y : | s(y-z)-v|< \epsilon \; \textrm{for some $ s > 0 $}  \} \cap \mathbf{U}(z,r) \subseteq \Sigma^{+}(P_{a,x}),
		\end{equation*}
		whence we deduce that $ \Tan^{n}(\Haus{n}\restrict B,z) \subseteq \Real{n+1} \cap \{ w : w \bullet \nu \leq 0   \} $ by \cite[3.2.16]{MR0257325}. Since $ B $ is $ n $ rectifiable by \cite[4.12]{2017arXiv170309561M}, we deduce from \cite[3.2.19]{MR0257325} that for $ \Haus{n} $ a.e.\ $ z \in B $ there exists $ x \in C $ such that $ z \in \Sigma(P_{a,x}) $ and
		\begin{equation*}
		\Tan^{n}(\Haus{n}\restrict B,z) = \Tan(\Sigma(P_{a,x}),z).
		\end{equation*}
		Therefore for $ \Haus{n} $ a.e.\ $ z \in B $ there exists $ x \in C $ and an orthonormal basis $ e_{1}, \ldots , e_{n} $ of $ \Real{n} $ such that  
		\begin{equation*}
		e_{n} = \nabla P_{a,x}(z')/|\nabla P_{a,x}(z')| \quad \textrm{if $ \nabla P_{a,x}(z') \neq 0 $},
		\end{equation*}
		and
		\begin{equation*}
		\Big\{(e_{1}, 0),\; \ldots , \; (e_{n-1},0),\; \frac{(e_{n}, |\nabla P_{a,x}(z')|)}{(1 + |\nabla P_{a,x}(z')|^{2})^{1/2}}\Big\}
		\end{equation*}
		is an orthonormal basis of $ \Tan^{n}(\Haus{n}\restrict B,z) $. If $ p : B \rightarrow \Real{n} $ is given by $ p(z)= z' $ for $ z \in B $, noting that $ p $ is injective, the conclusion can be easily obtained applying \cite[3.2.20]{MR0257325} with $ W $ and $ f $ replaced by $ B $ and $ p $.
		\end{proof}
	
	\begin{proof*}[Proof of \ref{ABP}]
	Combine \ref{ABP arbitrary cod} and \ref{ABP : aux lemma}.
	\end{proof*}

\section{Proof of \ref{Extrinsic weak Harnack inequality}}\label{section: A measure estimate}

\begin{Lemma}\label{measure to point estimate}
	For every $ n \geq 1 $ there exist $ \alpha > 1 $ and $ 0 < \beta < 1 $ such that if $ 0 \leq h < a \leq \alpha^{-1} $, $ \Gamma $ is an $ (n,h) $ subset of $ \mathcal{C}_{1}(0) $ with $ \Haus{n}(\Gamma) < \infty $, $ x_{0} \in \mathbf{U}^{n}(0,1) $, $ r > 0 $ such that
	\begin{equation*}
	\mathbf{B}^{n}(x_{0},r) \subseteq \mathbf{U}^{n}(0,1) \quad \textrm{and} \quad A'_{a}(\Gamma) \cap \mathbf{U}^{n}(x_{0},r) \neq \varnothing,
	\end{equation*}
	then 
	\begin{equation*}
	\Leb{n}(A'_{\alpha a}(\Gamma) \cap \mathbf{U}^{n}(x_{0}, r/8)) \geq \beta r^{n}.
	\end{equation*}
\end{Lemma}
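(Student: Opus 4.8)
The plan is to reduce the statement to a normalized situation and then apply the ABP estimate of Corollary \ref{ABP} to a carefully chosen closed set $C$. First I would use a translation and dilation to bring the ball $\mathbf{B}^{n}(x_0,r)$ to $\mathbf{B}^{n}(0,1)$: under the map $z \mapsto r^{-1}(z - (x_0,0))$ the set $\Gamma$ goes to an $(n, rh)$ subset $\tilde\Gamma$ of $\mathscr{C}_1(0)$ (the mean-curvature-type bound scales like a length), and the touching paraboloids of opening $a$ transform into paraboloids of opening $ra$; so after rescaling we may assume $x_0 = 0$, $r = 1$, that $\Gamma$ is an $(n,h)$ subset with $h < a \le \alpha^{-1}$ for the rescaled parameters, and that $A'_a(\Gamma) \cap \mathbf{U}^n(0,1) \neq \varnothing$. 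Fix a point $\bar x \in A'_a(\Gamma) \cap \mathbf{U}^n(0,1)$; by definition there is a paraboloid $P_{a,\bar x}$ of opening $a$ touching $\overline\Gamma$ from above at some point $\bar z$ with $\bar z' = $ (something near $\bar x$, not exactly, since the touching point need not lie over the center).

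The key geometric step is the following \emph{sliding} observation: if $P$ is a paraboloid of opening $a$ touching $\overline\Gamma$ from above, and $\alpha$ is chosen large compared to the geometry of $\mathscr{C}_1(0)$, then for \emph{every} $x$ in a fixed-size ball $\mathbf{U}^n(0,1/8)$ one can lower and re-center a paraboloid of the larger opening $\alpha a$ until it first touches $\overline\Gamma$; because $\overline\Gamma$ is already trapped below $P$ and $\Gamma$ is nonempty inside the cylinder, this first-touching paraboloid $P_{\alpha a, x}$ exists with its touching point over a point of $\mathbf{U}^n(0,1)$ — this uses that a paraboloid of opening $\alpha a \ge a$ with center in $\mathbf{U}^n(0,1/8)$ lies above $P$ outside a controlled region once its vertex is high enough, and below $\overline\Gamma$ once its vertex is low enough, so a first-contact value of the height parameter exists by continuity. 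This shows $\mathbf{U}^n(0,1/8) \subseteq A'_{\alpha a}(\Gamma)$ up to the requirement that the contact point project into $\mathbf{U}^n(0,1)$, which is again a matter of choosing $\alpha$ large: a paraboloid of large opening whose center is in $\mathbf{U}^n(0,1/8)$ and which touches a set contained in the slab-like region $\overline\Gamma \subseteq \Sigma^-(P)$ cannot have its contact point far from its center. Taking $C = \mathbf{B}^n(0,1/8)$ (intersected with the closed set $A'_{\alpha a}(\Gamma)$-projection if needed to ensure $A_{\alpha a}(\Gamma;C) \subseteq \Gamma \times \mathbf{S}^n$), Corollary \ref{ABP} gives
\begin{equation*}
\Leb{n}\big(\mathbf{B}^n(0,1/8)\big) \le \gamma\,(1 + \alpha a + h(\alpha a)^{-1})^{n}(1 + 4\alpha^2 a^2)^{1/2}\,\Haus{n}\big(A'_{\alpha a}(\Gamma)\big),
\end{equation*}
and since $h < a \le \alpha^{-1}$ the bracketed factors are bounded by a constant depending only on $n$ and $\alpha$; hence $\Haus{n}(A'_{\alpha a}(\Gamma) \cap \mathbf{U}^n(0,1/8))$ is bounded below by a positive constant. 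Undoing the rescaling converts $\Leb{n}(\mathbf{B}^n(0,1/8))$ into a multiple of $r^n$ and the ball into $\mathbf{U}^n(x_0, r/8)$, which yields the asserted lower bound $\beta r^n$.

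The main obstacle, and the place requiring the most care, is the sliding argument: one must verify that a downward-translated, re-centered paraboloid of opening $\alpha a$ genuinely achieves a first contact with $\overline\Gamma$ \emph{inside} the cylinder and not through the lateral boundary, so that the resulting contact pair $(z,\eta)$ actually lies in $A_{\alpha a}(\Gamma; \mathbf{B}^n(0,1/8))$ with $z \in \Gamma$ (not merely $z \in \overline\Gamma$), as required by the hypothesis $A_{\alpha a}(\Gamma;C)\subseteq \Gamma\times\mathbf{S}^n$ of Corollary \ref{ABP}. This is where the quantitative choice of $\alpha$ (and the smallness $a \le \alpha^{-1}$) enters: a paraboloid of opening $\alpha a$ rises by roughly $\alpha a \cdot t^2$ over distance $t$, so if $\alpha a$ is not too small the paraboloid clears the lateral boundary $|z'| = 1$ before dropping to the level where $\overline\Gamma$ could be met near that boundary, forcing interior contact; meanwhile $\alpha a \le 1$ keeps the constants in the ABP estimate under control. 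Once this is set up, the remaining steps — the comparison with the fixed paraboloid $P$, the continuity/first-contact argument, and bookkeeping of the scaling — are routine, and $\beta$ and the final $\alpha$ are then extracted as explicit functions of $n$.
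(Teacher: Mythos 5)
There is a genuine gap at your ``sliding'' step, and it is exactly the point where the paper has to work hardest. The hypothesis gives you \emph{one} paraboloid $P_{a,x_1}$ of opening $a$, whose center $x_1$ may be anywhere in $\mathbf{B}^n(0,1)$ and in particular far from $x_0$, touching $\overline{\Gamma}$ at a point $\bar z$ with $\bar z'\in\mathbf{U}^n(x_0,r)$. If you now take a free-standing paraboloid of opening $\alpha a$ centered at $x\in\mathbf{U}^n(x_0,r/8)$ and slide it down to first contact, the only quantitative information available is the trapping $\Sigma^{+}(P_{a,x_1})\cap\overline{\Gamma}=\varnothing$ plus the existence of $\bar z$; comparing heights at $\bar z$ and at the new contact point $w$ gives
$\tfrac{\alpha a}{2}|w'-x|^{2}\le \tfrac{a}{2}\bigl(|w'-x_1|^{2}-|\bar z'-x_1|^{2}\bigr)+\tfrac{\alpha a}{2}|\bar z'-x|^{2}$,
and since $|w'-x_1|$ can be of order $1$ this only yields $|w'-x|\lesssim \alpha^{-1/2}+r$. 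That is useless when $r\ll\alpha^{-1/2}$: $\alpha$ is a fixed constant depending on $n$, while $r$ may be arbitrarily small, so you cannot force the new contact points into $\mathcal{C}_{r/8}(x_0)$ (nor even verify the hypothesis $A_{\alpha a}(\Gamma;C)\subseteq\Gamma\times\mathbf{S}^n$ of \ref{ABP}). Note also that your geometric step uses only that $\overline{\Gamma}$ is closed and trapped below one paraboloid -- it never invokes the $(n,h)$ property -- and with only that information the localization is simply false (a set hugging $P_{a,x_1}$ only far from $x_0$, together with the single contact point $\bar z$, defeats it). Two further symptoms of the same problem: the intermediate assertion $\mathbf{U}^n(0,1/8)\subseteq A'_{\alpha a}(\Gamma)$ confuses centers with projected contact points ($A'$ consists of the latter), and your final display bounds only the global quantity $\Haus{n}(A'_{\alpha a}(\Gamma))$, which gives no control on $\Haus{n}\bigl(A'_{\alpha a}(\Gamma)\cap\mathbf{U}^n(x_0,r/8)\bigr)$ without the missing localization.

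The paper supplies precisely this missing ingredient with a barrier argument at scale $r$ before any paraboloid is recentered: it perturbs $P_{a,x_1}$ by $ar^{2}\phi(|x-x_0|/r)$, where $\phi$ digs a well of depth $\sim ar^{2}$ inside $\mathcal{C}_{r/16}(x_0)$, checks by an explicit computation that the graph of $\psi$ is a strict supersolution (trace of the second fundamental form below $-h(1+|\nabla\psi|^{2})^{1/2}$) in the annulus $r/16<|x-x_0|<r$, and then uses the weak maximum principle \ref{weak maximum principle} -- i.e.\ the $(n,h)$ property itself -- to force the first contact of $\psi+t$ to occur over $\overline{\mathbf{B}^n(x_0,r/16)}$. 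This produces $z\in\Gamma$ with $z'$ near $x_0$ and $0\le P_{a,x_1}(z')-z_{n+1}\le C(n)\,ar^{2}$, a vertical closeness estimate at scale $ar^{2}$. Only with that in hand does sliding work: the paper uses $Q_y=P_{a,x_1}+\theta\tfrac a2|\cdot-y|^{2}+c_y$ for $y\in\mathbf{B}^n(z',r/64)$ -- perturbations of the \emph{same} $P_{a,x_1}$, so the troublesome term $|w'-x_1|^{2}-|\bar z'-x_1|^{2}$ never appears -- and the closeness estimate forces their contact points into $\mathcal{C}_{r/8}(x_0)$; these are paraboloids of opening $(\theta+1)a$ whose centers fill a set of measure comparable to $r^{n}$, to which \ref{ABP} is then applied. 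Your rescaling reduction is also not as clean as stated (it sends $\mathcal{C}_1(0)$ and the ball of admissible centers to sets of radius $1/r$, so the problem does not literally become the case $r=1$), but that is minor compared with the absent barrier/localization step.
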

\begin{proof}
	Let $ x_{1} \in \mathbf{B}^{n}(0,1)	$ such that $ \Sigma(P_{a,x_{1}}) \cap \Gamma \cap \mathcal{C}_{r} (x_{0}) \neq \varnothing $. For every $ \gamma > 1 $ we let 
	\begin{equation*}  
	\phi(t) = 
	\begin{cases}
	-	\gamma^{-1}(16^{\gamma}-1) & \textrm{if $ 0 \leq t \leq 1/16 $}\\
	-\gamma^{-1}(t^{-\gamma} - 1) & \textrm{if $ 1/16 \leq t \leq 1 $}\\
	0 & \textrm{if $ t \geq 1 $} \\
	\end{cases}
	\end{equation*}
	and $ \psi(x) = P_{a,x_{1}}(x) + a r^{2} \phi(|x-x_{0}|/r) $ for every $ x \in \Real{n} $.
	
	Firstly, we prove the existence of a number $ 0 < \gamma < \infty $ depending only on $ n $ such that if $ h < a \leq (16^{\gamma + 1} + 2)^{-1} $ then\footnote{$Q_{\Sigma(\psi)}$ is the second fundamental of the $ \Sigma(\psi)$.} 
	\begin{equation}\label{measure to point estimate:1}
	\trace Q_{\Sigma(\psi)}(z) \bullet (-\nabla \psi(z'),1)  < -h(1 + |\nabla \psi(z')|^{2})^{1/2}
	\end{equation}
	for every $ z \in \Sigma(\psi) $ with $ r/16 < | z'-x_{0}| < r $. We fix $ x \in \Real{n} $ with $ r/16 < | x-x_{0}| < r $ and let $ z = (x, \psi(x)) $ and $  t = |x-x_{0}|/r $. We notice that
	\begin{equation*}
	|\nabla \psi(x)| \leq a(2+16^{\gamma+ 1}) \leq 1
	\end{equation*}
and if $ e_{1}, \ldots , e_{n} $ is an orthonormal basis of $ \Real{n} $ such that $ e_{n} = \nabla \psi(x)/ |\nabla \psi(x) | $ if $\nabla \psi(x) \neq 0 $, then
	\begin{equation*}
	\Big\{(e_{1}, 0),  \ldots ,  (e_{n-1}, 0), \frac{(e_{n}, |\nabla \psi(x)|)}{(1+|\nabla \psi(x)|^{2})^{1/2}} \Big\}
	\end{equation*}
	is an orthonormal basis of $ \Tan(\Sigma(\psi),z) $. Henceforth, letting $ \zeta = (1+|\nabla \psi(x)|^{2})^{-1/2}  $ and $ \nu = \zeta (-\nabla \psi(z'),1) $ and noting that $ 1/2 \leq \zeta \leq 1 $, we compute
	\begin{flalign*}
	& \trace Q_{\Sigma(\psi)}(z)\bullet \nu  = \zeta a (n-1) (1 + t^{-\gamma -2})  \\   & \qquad\qquad - \zeta a (\gamma + 2)t^{-\gamma - 2} \sum_{i=1}^{n-1}\bigg|\frac{x-x_{0}}{|x-x_{0}|} \bullet e_{i} \bigg|^{2} +  \zeta^{3}a (1 + t^{-\gamma -2}) \\ & \qquad\qquad - \zeta^{3}a (\gamma + 2)t^{-\gamma -2}\bigg|\frac{x-x_{0}}{|x-x_{0}|} \bullet e_{n} \bigg|^{2} \\
	&  \qquad \leq \zeta a(n-1)(1 + t^{-\gamma - 2}) -  a \zeta^{3}(\gamma + 2)t^{-\gamma - 2}  +  a \zeta^{3}(1 + t^{-\gamma - 2}) \\
	&  \qquad \leq an(1 + t^{-\gamma - 2}) -   \frac{a}{8}(\gamma + 2)t^{-\gamma - 2}.
	\end{flalign*} 
Since $ 0 \leq h < a $, we can evidently choose $ \gamma> 0 $ depending only on $ n $ so that the last line in the previous estimate is less than $ -h $ and \eqref{measure to point estimate:1} is proved.
	
	Next we prove that if $ h <a \leq (16^{\gamma + 1}+2)^{-1} $ then there exists $ z \in \Gamma \cap \overline{\mathcal{C}_{r/16}(x_{0})} $ such that 
	\begin{equation}\label{measure to point estimate:2}
	0 \leq P_{a,x_{1}}(z') - z_{n+1} \leq \gamma^{-1}(16^{\gamma} - 1) a r^{2}.
	\end{equation}
	In fact, since $ \Sigma^{+}(\psi) \cap \Gamma \cap \mathcal{C}_{r}(x_{0}) \neq \varnothing $ and $ \Sigma^{+}(\psi + ar^{2}\gamma^{-1}(16^{\gamma} - 1)) \cap \overline{\Gamma} = \varnothing $, we infer that there exists $ 0 < t \leq ar^{2}\gamma^{-1}(16^{\gamma} - 1) $ such that $ \psi + t $ touches $ \overline{\Gamma} $ from above. Since $ \psi(x) + t > P_{a,x_{1}}(x) $ if $ |x-x_{0}| \geq r $, we infer that
	\begin{equation*}
	\varnothing \neq \Sigma(\psi + t) \cap \Gamma \subseteq  \mathcal{C}_{r}(x_{0});
	\end{equation*}
	moreover, it follows from \ref{weak maximum principle} and \eqref{measure to point estimate:1} that
	\begin{equation*}
	\Sigma(\psi + t) \cap \Gamma \cap \big(\mathcal{C}_{r}(x_{0}) \sim \overline{\mathcal{C}_{r/16}(x_{0})}  \big) = \varnothing.
	\end{equation*}
	If $ z \in \Sigma(\psi + t) \cap \Gamma \cap \overline{\mathcal{C}_{r/16}(x_{0})} $ then
	\begin{equation*}
	z_{n+1}=\psi(z')+t=P_{a,x_{1}}(z') - ar^{2}\gamma^{-1}(16^{\gamma}-1) + t \leq P_{a,x_{1}}(z')
	\end{equation*}
	and \eqref{measure to point estimate:2} follows.

	We fix now $ a $, $ h $ and $ z $ as in \eqref{measure to point estimate:2} and let $ \theta > 0 $ to be chosen later depending only on $ \gamma $. For every $ y \in \mathbf{B}^{n}(z', r/64) $ we select $ c_{y} \in \Real{} $ such that the paraboloid
	\begin{equation*}
	Q_{y}(x) = P_{a,x_{1}}(x) +  \theta\frac{a}{2}|x-y|^{2}+c_{y}
	\end{equation*}
	touches $ \overline{\Gamma} $ from above. Noting that $ Q_{y}(z') \geq z_{n+1} $ and using \eqref{measure to point estimate:2}, we choose $ \theta $ large enough depending on $ \gamma $ so that $ Q_{y}(x) > P_{a,x_{1}}(x) $ whenever $ | x - z'| \geq r/16 $, which implies that
	\begin{equation*}
	\varnothing \neq \Sigma(Q_{y}) \cap \Gamma \subseteq \mathcal{C}_{r/16}(z') \subseteq \mathcal{C}_{r/8}(x_{0}).
	\end{equation*}
	Since $ Q_{y} $ is a paraboloid of center $ (\theta/(1+\theta))y +  (1/(1+\theta))x_{1} $ and opening $ (\theta+1)a $ we can apply \ref{ABP} with $ C $ replaced by 
	\begin{equation*}
	\bigg\{ \frac{\theta}{1 + \theta}y + \frac{1}{\theta + 1}x_{1} : y \in \mathbf{B}^{n}(z', r/64) \bigg\} \subseteq \mathbf{B}^{n}(0,1)
	\end{equation*}
	to obtain the conclusion.
\end{proof}

\begin{Remark}
	The proof of \ref{measure to point estimate} closely follows \cite[2.2]{MR2334822}, where continuous viscosity super-solutions of certain elliptic equations (including the minimal surface equation) are treated.
\end{Remark}

\begin{proof*}[Proof of \ref{Extrinsic weak Harnack inequality}]
This is a standard argument that combines the Vitali-type covering lemma in \cite[2.3]{MR2334822} with the measure-estimate \ref{measure to point estimate}. 

Let $ \alpha $ and $ \beta $ be as in \ref{measure to point estimate}, $ \epsilon = (48\alpha^{k+1})^{-1} $, $ \mu > 0 $ and let $ k $ be a positive integer to be chosen later depending only on $ n $ and $ \mu $. If $ P $ is the paraboloid of center $ 0 $ and opening $ 48\epsilon $ touching $ \overline{\Gamma} $ from above, we observe that the set of touching points of $ P $ with $\overline{\Gamma}$ is contained in $ \mathcal{C}_{1/3}(0) $. For every integer $ j \geq 0 $ we let $ F_{j} = \mathbf{B}^{n}(0,1/3) \cap A'_{48\epsilon\alpha^{j}}(\Gamma) $ and, noting that $ F_{0} \neq \varnothing $, we combine \cite[2.3]{MR2334822} (or \cite[6.4]{MR2757359}) and \ref{measure to point estimate} to conclude
\begin{equation*}
	\Leb{n}(\mathbf{B}^{n}(0,1/3) \sim A'_{\alpha^{-1}}(\Gamma)) \leq (1-\beta_{1})^{k}\Leb{n}(\mathbf{B}^{n}(0,1/3)),
\end{equation*}
where $ \beta_{1} $ depends only on $ \beta $. Now we choose $ k $ so that $ (1-\beta_{1})^{k} \leq \mu $.
\end{proof*}

\medskip 

\noindent Institut f\"ur Mathematik, Universit\"at Augsburg, \newline Universit\"atsstr.\ 14, 86159, Augsburg, Germany,
\newline mario.santilli@math.uni-augsburg.de

\end{document}